\newcommand{\PP}{{\mathbb{P}}}
\newcommand{\TVS}{{\mathbb{A}}}
\newcommand{\EE}{{\mathbb{E}}}
\newcommand{\D}{{\mathbb{D}}}
\newcommand{\LL}{{\mathcal{L}}}
\newcommand{\eps}{{\epsilon}}
\newtheorem{theorem}{Theorem}[section]
\newtheorem{corollary}[theorem]{Corollary}
\newtheorem{lemma}[theorem]{Lemma}
\newtheorem{proposition}[theorem]{Proposition}
\theoremstyle{definition}
\theoremstyle{remark}
\theoremstyle{remark}
\title{Thick points of the planar GFF are totally disconnected for all $\gamma\ne 0$}
\author{Juhan Aru \thanks{\'{E}cole Polytechnique F\'{e}d\'{e}rale de Lausanne} \and L\'{e}onie Papon \thanks{Durham University}   
\and
Ellen Powell \footnotemark[2]}
\date{}
\begin{document}

\maketitle

\begin{abstract}
We prove that the set of $\gamma$-thick points of a planar Gaussian free field (GFF) with Dirichlet boundary conditions is a.s. totally disconnected for all $\gamma \neq 0$. Our proof relies on the coupling between a GFF and the nested $\mathrm{CLE}_4$. In particular, we show that the thick points of the GFF are the same as those of the weighted $\mathrm{CLE}_4$ nesting field {introduced in \cite{nesting_lim} and establish the almost sure total disconnectedness of the complement of a nested $\mathrm{CLE}_{\kappa}$, $\kappa \in (8/3,4]$. As a corollary we see that the set of singular points for supercritical LQG metrics is a.s. totally disconnected.}
\end{abstract}

\section{Introduction}

The two-dimensional continuum Gaussian free field (GFF) first appeared in the context of Euclidean quantum field theory to model the free massless bosons \cite{SimonEQFT}. From a mathematical perspective, the study of this object is motivated by its connections to many other planar models. For example, its rich interplay with Schramm-Loewner evolutions (SLE) and conformal loop ensembles (CLE) has led to a deeper understanding of these objects, and vice versa \cite{DubSLE,localset,IG_1}. 
Moreover, the two-dimensional GFF is conjectured, and in some special cases proved, to be the scaling limit of a wide class of lattice models at criticality, such as the height fluctuation of the dimer model \cite{dimers_lim}, 
or the Ginzburg-Landau model \cite{GLmodel}. It also arises in the context of random planar maps via Liouville quantum gravity \cite{cvg_triang} and plays a major role in the probabilistic construction of certain conformal field theories, e.g., \cite{LQG_sphere}. In fact, the GFF can be characterized as the only scale invariant planar field that satisfies a natural domain Markovian property, \cite{GFFchar}, which goes some way to explaining this ubiquity.

\subsection{Thick points of the GFF and main results}
In this work, we study the set of thick points of the GFF with Dirichlet boundary conditions (Dirichlet GFF). A GFF $h$ with Dirichlet boundary conditions in an open and simply connected domain $D \subset \mathbb{C}$ is a centered Gaussian process indexed by smooth functions that are compactly supported in $D$. Its covariance is given by, for $f, g \in \mathcal{C}_{c}^{\infty}(D)$,
\begin{equation*}
    \EE[(h,f)(h,g)] = \int_{D \times D} f(x)G_{D}(x,y)g(y) dx dy
\end{equation*}
where $G_D$ is the Green function of the Laplacian in $D$ with Dirichlet boundary conditions, normalised so that $\Delta G_D(x,\cdot) = -\delta_{x}(\cdot)$. As $G_{D}(x,x)=\infty$, the process $(h,f)_{f\in \mathcal{C}_c^\infty(D)}$ does not correspond to integration against a pointwise defined function. It does, however, almost surely correspond to an element of the Sobolev space $\mathcal{H}^{-\eps}(D)$, $\eps > 0$, i.e. a distribution, or generalised function. The conformal invariance of $G_D$ implies that $h$ is itself conformally invariant in the sense that if $\varphi: D \to \tilde D$ is a conformal map, then $\tilde h$ defined by {$(\tilde h, f) := (h, \vert \varphi ' \vert^{2} (f \circ \varphi^{-1}))$}, $f \in \mathcal{C}_{c}^{\infty}(\tilde D)$, is a Dirichlet GFF in $\tilde D$. See, for example, \cite{book_GFF} for proofs of these facts and further background.

For $h$ a Dirichlet GFF in $D$, the set of thick points of $h$ is a special set of points at which, loosely speaking, $h$ takes atypically high or low values. As $h$ is not defined pointwise, this set must be defined by regularisation. Let $z \in D$ and $r > 0$ and denote by $\rho_r^z$ the uniform measure on $\partial B(z,r)$ where $B(z,r)$ is the ball centered at $z$ of radius $r$. We consider the random variable $h_r(z):=(h,\rho_{r}^z)$ which is well-defined, e.g. by taking limits, 
since the integral $\int G_{D}(x,y) \rho_r^z(dx) \rho_r^z(dy)$ is finite. 

In fact, by \cite[Proposition~2.1]{thick_points}, if $h$ is a GFF with Dirichlet boundary conditions in the unit disc $\mathbb{D}:= \{ z \in \mathbb{C}: \vert z \vert <1 \}$, then $(h_r(z))_{r,z}$ has a version  
such that with probability one, for every $\alpha \in (0,1/2), \upzeta \in (0,1)$ and $\epsilon>0$ there exists a (random) constant $M=M(\alpha,\upzeta,\epsilon)<\infty$ such that for all $z,w \in \mathbb{D}$ and $s,r\in (0,1]$ with $1/2<r/s<2$,
\begin{equation}\label{eq_versionintro}
    \vert h_r(z) -  h_{s}(w) \vert \leq M \bigg( \log \frac{1}{r} \bigg)^{\upzeta} \frac{\vert (z,r) - (w,s)\vert^{\alpha}}{r^{\alpha+\epsilon}}.
\end{equation}
In the rest of the paper, we will only work with this version of the circle average process.

For fixed $z\in D$, a direct calculation shows that the process $h_{e^{-t}}(z)$ actually evolves as a linear Brownian motion in $t$. In particular, $\lim_{r \to 0} h_r(z)/\log(1/r)=0$ almost surely.
However, this does not rule out the existence of exceptional points at which this limit is non-zero: these points are called the thick points of $h$. It is natural to define, for $\gamma \in \mathbb{R} $, the set of $\gamma$-thick points of $h$ by
\begin{equation}\label{thick_def}
    \mathcal{T}_{\gamma}(h) := \{ z \in D: \lim_{r \to 0} \frac{h_r(z)}{\log 1/r} = \frac{\gamma}{\sqrt{2\pi}} \}
\end{equation}
where the factor $1/\sqrt{2\pi}$ comes from our choice of normalisation for the Green function. Note that since we work with a H\"{o}lder continuous version of the circle average process, as in \eqref{eq_versionintro}, there is an event of probability one on which we can determine the existence (or not) of the limit in \eqref{thick_def} for all $z$ in $D$ simultaneously. That is, the set $\mathcal{T}_\gamma(h)$ is well defined with probability one.

By \cite[Corollary~1.4]{thick_points}, the set $\mathcal{T}_\gamma(h)$ set is also conformally invariant in the following sense. If $\varphi: D \to \tilde D$ is a conformal map, then almost surely for any $\gamma \in [-2,2]$, $\varphi(\mathcal{T}_{\gamma}(h))=\mathcal{T}_{\gamma}(\tilde h)$ where $\tilde h$ (defined as the image of $h$ under $\varphi$ as above) is a GFF with Dirichlet boundary conditions in $\tilde D$.  Moreover, as shown in \cite{thick_points}, if $\vert \gamma \vert > 2$, this set is almost surely empty and if $\gamma \in [-2,2]$, it almost surely has Hausdorff dimension $2-\gamma^2/2$. {In particular, if $\gamma=0$, then $\mathcal{T}_{0}(h)$ almost surely has Hausdorff dimension 2: $0$-thick points are typical, as discussed above.}

Here, we prove another geometric property of $\mathcal{T}_{\gamma}(h)$. {Recall that a set $U$ is said to be totally disconnected if for each point $x \in U$, the connected component of $x$ in $U$ consists of just that point $x$. By \cite[Proposition~3.5]{Falconer}, a sufficient condition for a set to be totally disconnected is that this set has Hausdorff dimension strictly less than 1. In particular, observe that if $\vert \gamma \vert > \sqrt{2}$, then $\mathcal{T}_{\gamma}(h)$ has almost sure Hausdorff dimension strictly less than 1, which therefore implies that $\mathcal{T}_{\gamma}(h)$ is almost surely totally disconnected.} One may wonder whether this property extends to the full range $\gamma \in [-2,2] \setminus \{0\} $. The answer to this question is positive and this is the main result of our work. By conformal invariance, we may restrict ourselves to the case where $h$ is a Dirichlet GFF in $D=\mathbb{D}$ where $\mathbb{D}:=\{z \in \mathbb{C}: \vert z \vert < 1 \}$ is the complex unit disc.

\begin{theorem} \label{th_thickpts}
Let $h$ be a GFF with Dirichlet boundary conditions in $\mathbb{D}$. Then almost surely, $\mathcal{T}_{\gamma}(h)$ is totally disconnected for all $\gamma \in [-2,2]\setminus \{0\}$.
\end{theorem}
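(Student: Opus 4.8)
\noindent\textit{Proof idea.}
The plan is to transfer the statement to the conformal loop ensemble $\CLE_4$ and to a combinatorially transparent field built from it. By the conformal invariance of $\mathcal{T}_\gamma$ recalled above it is enough to treat $D=\mathbb{D}$; since $\mathcal{T}_\gamma(h)=\emptyset$ for $|\gamma|>2$, and since the construction below yields a single almost sure event on which the conclusion holds for every admissible $\gamma$ at once, I will fix $\gamma\in[-2,2]\setminus\{0\}$. First I would invoke the coupling of $h$ with a nested $\CLE_4$ carrying i.i.d.\ fair signs $(\xi_{\mathcal{L}})$ on its loops, obtained by iterating the two-valued local set $\mathbb{A}_{-2\lambda,2\lambda}$ (with $2\lambda>0$ the associated height gap): in this coupling, inside every loop $\mathcal{L}$ the field $h$ equals the constant $2\lambda\sum\xi_{\mathcal{L}'}$, the sum being over the loops $\mathcal{L}'$ surrounding or equal to $\mathcal{L}$, plus an independent zero-boundary $\GFF$, recursively down the nesting. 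Let $\Phi$ denote the associated weighted $\CLE_4$ nesting field of \cite{nesting_lim}; heuristically its circle average at $z$ and scale $r$ is $\Phi_r(z)=2\lambda\sum_k\xi^z_k$, the sum running over the loops $\mathcal{L}^z_1\supset\mathcal{L}^z_2\supset\cdots$ that surround $z$ and have conformal radius from $z$ at least a fixed multiple of $r$, together with small contributions of loops meeting $\partial B(z,r)$ without surrounding $z$.

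The first step is to show $\mathcal{T}_\gamma(h)=\mathcal{T}_\gamma(\Phi)$. Unwinding the recursion, for fixed $z$ one has $h_r(z)=2\lambda\sum_{k\le n}\xi^z_k+(\hat h^z)_r(z)$, where $n=n(z,r)$ is the largest index with $B(z,r)\subset\inte(\mathcal{L}^z_n)$ and $\hat h^z$ is the zero-boundary $\GFF$ in $\inte(\mathcal{L}^z_n)$; since $\CR(z,\inte(\mathcal{L}^z_n))$ is comparable to $r$ by maximality, this last circle average has variance $O(1)$, and likewise $h_r(z)-\Phi_r(z)$ has conditional variance $O(1)$ given the $\CLE_4$ (the remaining discrepancy coming from loops crossing $\partial B(z,r)$, whose contributions have $\ell^2$-norm bounded by the total normalised arclength, hence by a constant). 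A chaining and Borel--Cantelli argument over dyadic radii and a fine net of centres, relying on the H\"older-type modulus of continuity of circle averages from \eqref{eq_versionintro} (and, by conformal invariance, its analogue inside each loop), would upgrade this to $\sup_{z\in\mathbb{D}}|h_r(z)-\Phi_r(z)|=o(\log(1/r))$ almost surely; this error does not affect the limit in \eqref{thick_def}, so $\mathcal{T}_\gamma(h)=\mathcal{T}_\gamma(\Phi)$. Moreover, if $z$ lies in the complement of $A_\infty:=\bigcap_n A_n$, where $A_n$ is the open union of the interiors of the level-$n$ loops --- i.e.\ if $z$ is surrounded by only finitely many loops of the nested $\CLE_4$ --- then for small $r$ the ``nesting'' part of $\Phi_r(z)$ is a fixed constant and the remainder is $O(1)$ by the arclength bound, so after chaining $\Phi_r(z)=o(\log(1/r))$ uniformly over such $z$; hence $\mathcal{T}_\gamma(\Phi)\subset A_\infty$ for every $\gamma\ne0$.

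The second step, which I expect to be the main obstacle, is the purely geometric statement that $A_\infty$ --- the complement in $\mathbb{D}$ of the (fully) nested $\CLE_4$ --- is almost surely totally disconnected. Since each $A_n$ is a disjoint union of Jordan domains, the connected component of any $z\in A_\infty$ inside $A_\infty$ is contained in $\bigcap_n\inte(\mathcal{L}^z_n)$, so it suffices to show that almost surely $\operatorname{diam}(\mathcal{L}^z_n)\to0$ as $n\to\infty$, \emph{simultaneously for all $z\in A_\infty$}. For a single $z$ this is immediate from the a.s.\ exponential decay of $\CR(z,\inte(\mathcal{L}^z_n))$ in $n$ (a consequence of the Markov property of $\CLE$) together with Koebe's theorem; the delicate point is the uniformity over the uncountable set $A_\infty$, i.e.\ ruling out an infinite nested chain of loops whose Euclidean diameters stay bounded away from $0$. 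I would address this with a quantitative, scale-by-scale version of the local finiteness of $\CLE_4$ --- bounding, uniformly in $z$, the number of loops surrounding $z$ of conformal radius from $z$ above a given threshold --- combined via a union bound over dyadic scales with the exponential decay of conformal radii along nested chains.

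Granting these steps, the conclusion is immediate: on the single almost sure event provided above, $\mathcal{T}_\gamma(h)=\mathcal{T}_\gamma(\Phi)\subset A_\infty$ for every $\gamma\in[-2,2]\setminus\{0\}$, and any subset of a totally disconnected set is totally disconnected; for $|\gamma|>2$ the set $\mathcal{T}_\gamma(h)$ is empty, which proves the theorem. Finally, carrying out the analogues of the last two steps for a nested $\CLE_\kappa$ with $\kappa\in(8/3,4]$ coupled to the relevant field would give the stated corollary about the singular points of supercritical LQG metrics. Besides the obstacle already flagged, the other point requiring care is making the estimates of the first step uniform in $z$ up to the boundary, where the number of surrounding loops is unbounded.
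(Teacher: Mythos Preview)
Your two-step architecture coincides with the paper's: (i) identify $\mathcal{T}_\gamma(h)$ with the thick points $\Phi_\gamma(h)$ of the weighted $\CLE_4$ nesting field (Theorem~\ref{th_CLEGFF} / Theorem~\ref{th_intro_nestingfield}), and (ii) show that the complement of the nested $\CLE_4$ is totally disconnected (Theorem~\ref{th_CLE}), whence $\Phi_\gamma(h)$ sits in a totally disconnected set. For step~(i) your outline is essentially the paper's: condition on the loops, use that the conditional circle average is Gaussian with $O(1)$ variance (the paper gets the clean bound $\log 4$ by Green-function monotonicity, Lemma~\ref{lemma_varsum}), then run Borel--Cantelli over a net of centres and radii together with the modulus of continuity \eqref{eq_versionintro}. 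Two small points: the paper also needs a separate argument for $|\tilde S_{r_n}(z_n)-\tilde S_r(z)|$ (counting loops in thin annuli and using extremal-distance estimates for $\CLE_4$), which you do not isolate; and your final worry about ``uniformity up to the boundary, where the number of surrounding loops is unbounded'' looks misdirected --- near $\partial\mathbb{D}$ there are \emph{fewer} surrounding loops, and the genuinely infinite collection (loops meeting $\partial B(z,r)$) is handled by the variance bound, not by counting.

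Where your proposal genuinely diverges from the paper is step~(ii), and here there is a gap. You propose to rule out an infinite nested chain of loops with diameters bounded below via ``exponential decay of conformal radii along nested chains'' combined with a quantitative local-finiteness bound. The difficulty is that conformal-radius decay from an interior point does \emph{not} control diameters: a nested sequence of simple loops can have $\CR(z,\inte(\mathcal{L}^z_n))\to 0$ for every interior $z$ while $\operatorname{diam}(\mathcal{L}^z_n)$ stays bounded away from $0$ (think of loops pinching near $z$ but stretching elsewhere). So the mechanism you name does not, by itself, close the argument, and ``local finiteness at each level'' gives no uniform control on how many large children a large loop can have. The paper instead exploits the Brownian loop-soup construction of $\CLE_\kappa$, $\kappa\in(8/3,4]$: large $\CLE$ loops force disjoint loop-soup chain crossings of fixed boundary annuli, whose probability is bounded via the BK inequality for Poisson point processes; this yields moments of $N_d(\tilde\Gamma_D)$ bounded \emph{uniformly} over all simply connected $D\subset\mathbb{D}$ (Lemma~\ref{lemma_dbound}), and then a H\"older/monotonicity trick gives $\mathbb{E}[N_{d_0}(\tilde\Gamma_D)]\le c<1$ uniformly (Lemma~\ref{lemma_expectation}). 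Encoding the large loops as a tree dominated by a subcritical branching process then finishes (Lemma~\ref{lemma_eps}). This loop-soup/BK input is the key idea you are missing; it is also why the paper's Theorem~\ref{th_CLE} is restricted to $\kappa\le 4$.
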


The proof of this result is based on a coupling of the Dirichlet GFF with a nested version of CLE$_4$. This coupling, and the construction of nested CLE$_\kappa$, will be described precisely below, but let us simply say for now that it gives rise to a \emph{different}, but natural, definition of the set of $\gamma$-thick points for the GFF, $\Phi_\gamma(h)$, defined via its so-called weighted CLE$_4$ nesting field, as studied in \cite{nesting_lim, nesting_extreme} and that we recall in equation \eqref{def_nesting4} below. It will follow rather immediately from the total disconnectedness of the complement of nested CLE$_4$ (the result of Theorem \ref{th_CLE} of this paper) that $\Phi_\gamma(h)$ is almost surely a totally disconnected set. Theorem \ref{th_thickpts} is then a consequence of the following result, that is of independent interest, and can be thought of as a universality statement for different notions of GFF-thick points. Results of this kind have also been obtained in \cite{Cipri} where it was shown that the thick points of the GFF defined via various approximations of the field -- convolution with a smooth mollifier, white noise approximations and integral cut-offs of the covariance -- agree, provided the approximations satisfy some regularity and second moment assumptions.

\begin{theorem}\label{th_intro_nestingfield}
Let $h$ be a GFF in $\mathbb{D}$ with Dirichlet boundary conditions. Then, with probability one, $\mathcal{T}_\gamma(h)\equiv \Phi_\gamma(h)$ for every $\gamma\in [-2,2]\setminus \{0\}$.
\end{theorem}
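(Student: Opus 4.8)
The plan is to compare the circle-average process $h_r(z)$ with the weighted $\mathrm{CLE}_4$ nesting field, and to show that the two regularisations of the GFF agree at the level of thickness up to lower-order fluctuations. Recall that in the coupling of $h$ with a nested $\mathrm{CLE}_4$, if one lets $(A_j^z)_{j\ge 1}$ denote the successive $\mathrm{CLE}_4$ loops surrounding a fixed point $z$ and $\sigma_j^z=\pm 2\lambda$ the associated i.i.d.\ signs, then the nesting field is morally $\sum_j \sigma_j^z$, regularised via the conformal radii $\CR(z,\cdot)$ of the nested domains; writing $N_t^z$ for the number of loops surrounding $z$ that have crossed ``scale'' $e^{-t}$, the partial sums $\sum_{j\le N_t^z}\sigma_j^z$ behave, after the time change given by $-\log\CR$, like a Brownian motion in $t$, and $\Phi_\gamma(h)$ is defined as the set of $z$ where this process divided by $\log(1/r)$ tends to $\gamma/\sqrt{2\pi}$. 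First I would record, via the coupling and the local set / strong Markov property of the GFF (as in \cite{localset, IG_1}), that conditionally on the $\mathrm{CLE}_4$ loops up to the one surrounding $z$ at scale $\approx r$, the field $h$ decomposes as a sum of a harmonic function taking the value $\sum_{j\le N}\sigma_j^z$ near $z$ plus an independent GFF in the innermost domain. Then $h_r(z)$ equals this harmonic contribution (evaluated against $\rho_r^z$) plus the circle average of the inner GFF, and I would control each piece.

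The key steps, in order, are: (1) Fix $z$ and show that for the coupling, $h_{e^{-t}}(z) - \widetilde{\Phi}_t^z \to 0$ (or at least $o(t)$) almost surely, where $\widetilde{\Phi}_t^z$ is the regularised nesting-field value at scale $e^{-t}$; this uses that the discrepancy between the harmonic extension of the boundary data on an approximately circular loop and its value at $z$ is controlled by the Koebe distortion estimates comparing $\CR(z,\cdot)$ with the actual inradius/outradius, and that the number of loops between consecutive dyadic scales has exponential tails (from the known law of $-\log\CR$ increments for $\mathrm{CLE}_4$, cf.\ \cite{nesting_lim}). (2) Upgrade this to a statement holding simultaneously for all $z$: here I would use the Hölder regularity of the circle-average process from \eqref{eq_versionintro}, together with an analogous modulus-of-continuity / multi-scale union-bound statement for the nesting field (again available from \cite{nesting_lim, nesting_extreme}), so that it suffices to control the difference along a countable dense set of $(z,r)$ and then interpolate. (3) Conclude that $\lim_{r\to0} h_r(z)/\log(1/r) = \gamma/\sqrt{2\pi}$ if and only if the same holds for the nesting field, for every $\gamma\neq 0$ simultaneously, giving $\mathcal{T}_\gamma(h) = \Phi_\gamma(h)$.

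The main obstacle is step (1), and specifically making the $o(t)$ bound on $h_{e^{-t}}(z)-\widetilde{\Phi}_t^z$ strong enough and uniform enough to survive step (2): the nesting field is defined through conformal radii, while $h_r(z)$ is defined through Euclidean circles, and near an irregular $\mathrm{CLE}_4$ loop these scales can differ by a multiplicative constant that is \emph{not} bounded uniformly in $z$ — it depends on the local geometry of the loop. One must therefore show that the places where conformal radius and Euclidean radius differ badly are not thick points, i.e.\ that the bad sets have dimension strictly less than $2-\gamma^2/2$, or more cleanly, that the time-change defect $-\log\CR(z, \cdot) - \log(1/r)$ is $o(\log(1/r))$ uniformly. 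I expect this to follow from quantitative estimates on $\mathrm{CLE}_4$ (the exponential moments of the conformal-radius increments, plus the fact that the number of loops crossing a given annulus is tight), combined with the same Borel–Cantelli / union-bound scheme over dyadic scales used to prove \eqref{eq_versionintro} itself. A secondary subtlety is that the limit definitions of $\mathcal{T}_\gamma$ and $\Phi_\gamma$ both require the existence of the limit, not just its value; but since the harmonic contribution is exactly the nesting-field contribution up to the controlled defect, existence transfers directly, and the restriction $\gamma\neq 0$ is used only to know that $\mathcal{T}_0$-type typical points (dimension $2$) are irrelevant, so no issue arises there.
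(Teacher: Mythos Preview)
Your overall architecture matches the paper's: reduce Theorem~\ref{th_intro_nestingfield} to the quantitative statement $\sup_{z}\limsup_{r\to 0}|\tilde h_r(z)-\tilde S_r(z)|=0$ almost surely (this is Theorem~\ref{th_CLEGFF}), and prove the latter by discretising $(z,r)$, applying Borel--Cantelli over a grid, and interpolating via the H\"older estimate~\eqref{eq_version}. However, you misidentify the main obstacle. In this paper the nesting field $S_r(z)$ is defined via \emph{Euclidean} balls, see~\eqref{def_nestingfield}: $\Gamma_r(z)$ is the set of loops surrounding $B(z,r)$, not a conformal-radius sublevel set. There is therefore no ``time-change defect $-\log\CR(z,\cdot)-\log(1/r)$'' to control, and the entire discussion you flag as the hard part is addressing a problem that does not arise here.

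The genuine difficulty, and the place where your proposal has a gap, is the conditioning in step~(1). You propose to condition on the loops up to the last one surrounding $B(z,r)$ and treat the remainder as ``an independent GFF in the innermost domain''. But the index $J^\cap_{z,r}-1$ of that last surrounding loop is \emph{not} a stopping time for the loop filtration (deciding it is the last one requires seeing the next loop), so the inner field is a GFF \emph{conditioned} on its first level loop hitting $B(z,r)$; the paper explicitly notes this obstruction. Even setting that aside, the conditional variance of the inner circle average would then be $\log(\CR(z,\ell^{J-1})/r)$, which is random and not uniformly bounded. The paper's key move is to condition one level deeper, on $\Gamma^{(J^\cap_{z,r})}$. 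The circle $\partial B(z,r)$ now meets infinitely many complementary components $O_j$, but the relevant Gaussian $\sum_j h^{(j)}_r(z)$ has conditional variance $\int G_{\tilde O}(x,y)\,\rho_r^z(dx)\rho_r^z(dy)$ with $\tilde O=\cup_j O_j$; by monotonicity of the Green function and Koebe (since $\ell^{J^\cap}$ intersects $B(z,r)$, the set $\tilde O\cup B(z,r)$ has conformal radius at most $4r$ from $z$) this is \emph{deterministically} at most $\log 4$ (Lemma~\ref{lemma_varsum}). That uniform bound is precisely what makes the Gaussian tail plus Borel--Cantelli argument go through cleanly.

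Your annulus loop-count estimates do appear in the paper, but for a different term: they control the discretisation error $|\tilde S_{r_n}(z_n)-\tilde S_r(z)|$ (term~\eqref{proba_nested}, Lemma~\ref{lem_A_bound}), not the comparison of $h_r$ with $S_r$. And the restriction $\gamma\neq 0$ plays no role in establishing $\mathcal{T}_\gamma=\Phi_\gamma$; it matters only for the separate disconnectedness statement.
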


\subsection{Application to Liouville quantum gravity}{
The exponential of the GFF first appeared in the physics literature in the context of Euclidean quantum field theories \cite{QFT}, where it is used to describe the exponential interaction. It has also been used in the description of fluctuating strings, and in relation to 2D toy models of quantum gravity \cite{Polyakov}.} The exponential of the GFF now plays a major role in random conformal geometry, where it is used to give a probabilistic construction of Liouville Conformal field theory (LCFT), solve random welding problems and investigate the continuum limit of random planar maps, to mention only a few applications. One of the most important objects in these contexts is the Liouville quantum gravity (LQG) measure. It depends on a parameter $\gamma \in (0,2)$ and can informally be defined as $\mu_{\gamma}(dz) = e^{\gamma h(z)}dz$ where $h$ is a Dirichlet GFF. As $h$ is not defined pointwise, the rigorous construction of $\mu_{\gamma}$ involves a regularisation procedure. When performed appropriately, it has been shown that this yields a limiting (atomless) measure for every $\gamma\in (0,2)$; see \cite{LQG_measureB} for an elementary exposition. This measure is intimately connected to thick points of the underlying field. Indeed, if one samples a point according to the normalised LQG measure $\mu_{\gamma}$ with parameter $\gamma \in (0,2)$, then this point is almost surely a $\gamma$-thick point of the field used to construct $\mu_{\gamma}$. 

{Another object of interest in the context of Liouville quantum gravity (and thus  random planar maps) is the so-called LQG metric which can be thought of as a conformal perturbation of the Euclidean metric by the exponential of the GFF.} Although the construction of the LQG metric is more involved than that of $\mu_h$, it has now been succesfully carried out in a series of works, \cite{tightness_sub,uniqueness_sub,tightness_super,uniqueness_super}.

For a parameter $\xi > 0$, the LQG metric in a disk $\D$ is formally defined by
\begin{equation} \label{def_metric}
    D_{{h}}^{\xi}(z,w) = \inf_{P: z \to w} \int_{0}^{1} e^{\xi h(P(t))} \vert P'(t) \vert dt,
\end{equation}
where the infimum is over all continuous paths from $z$ to $w$ inside $\D$ and $h$ is a GFF. The definition (\ref{def_metric}) is purely formal as ${h}$ is not defined pointwise. To properly construct the LQG metric, one defines rescaled approximations of $D_{h}^{\xi}$ using a regularised version of ${h}$ and then shows tightness of these approximations in an appropriate topology. The final step is to prove that any subsequential limit must satisfy a natural set of axioms that uniquely characterises the metric. 

The properties of the LQG metric crucially depend on the parameter $\xi$ in \eqref{def_metric}. In particular, by \cite{tightness_super, singularpts}, there exists a unique $\xi_{crit} > 0$ such that if $\xi > \xi_{crit}$, then the metric with parameter $\xi$ almost surely does \emph{not} induce the Euclidean topology on {$\mathbb{D}$}. Instead, such a metric, called supercritical, admits a set of singular points: these points are at infinite distance from every other point. We denote by $S_{{h}}^{\xi}(\mathbb{D})$ this set of singular points of $D_{{h}}^{\xi}$, that is
\begin{equation*}
    S_{{h}}^{\xi}(\mathbb{D}) := \{ z \in \mathbb{D} : D_{{h}}^{\xi}(z,w) = \infty  \quad  \forall \, w \in \mathbb{D} \setminus \{z\} \}.
\end{equation*}
This set is intimately related to thick points of $h$. Indeed, \cite[Proposition~1.11]{singularpts} shows that there exists $Q(\xi) \in (0,2)$ such that
\begin{equation*}
    S_{{h}}^{\xi}(\mathbb{D}) = \{ z \in \mathbb{D}: \limsup_{r \to 0} \frac{{h}_r(z)}{\log 1/r} > Q(\xi) \} \quad \text{almost surely.}
\end{equation*}
{A consequence of the proof of Theorem \ref{th_thickpts} is the almost sure total disconnectedness of $S_{{h}}^{\xi}(\D)$.
}
\begin{proposition} \label{prop_singular}
{Let $\xi > \xi_{crit}$. Then $S_{{h}}^{\xi}(\D)$ is almost surely totally disconnected. } 
\end{proposition}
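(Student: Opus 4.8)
The plan is to reduce the statement to the total disconnectedness of the complement of a nested $\mathrm{CLE}_4$, i.e.\ to Theorem~\ref{th_CLE}. By \cite[Proposition~1.11]{singularpts} we have, almost surely, $S_{h}^{\xi}(\mathbb{D}) = \{ z \in \mathbb{D} : \limsup_{r \to 0} h_r(z)/\log(1/r) > Q(\xi)\}$ with $Q(\xi) > 0$, and in particular $S_{h}^{\xi}(\mathbb{D}) \subseteq T^{+} := \{ z \in \mathbb{D} : \limsup_{r\to 0} h_r(z)/\log(1/r) > 0 \}$. Since any subset of a totally disconnected set is totally disconnected, it therefore suffices to show that $T^{+}$ is almost surely contained in the complement of a nested $\mathrm{CLE}_4$. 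Observe that the elementary dimension criterion \cite[Proposition~3.5]{Falconer} does not apply directly here: for supercritical $\xi$ the set $S_h^\xi(\mathbb{D})$ need not have Hausdorff dimension below $1$, which is precisely why Proposition~\ref{prop_singular} is a consequence of the \emph{proof} of Theorem~\ref{th_thickpts} and not of its statement alone.

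I would establish the containment $T^{+}\subseteq(\text{complement of a nested }\mathrm{CLE}_4)$ using the same mechanism that proves Theorems~\ref{th_thickpts} and~\ref{th_intro_nestingfield}. In the coupling of $h$ with the nested $\mathrm{CLE}_4$ and its weighted nesting field, the proof of Theorem~\ref{th_intro_nestingfield} compares, on an almost sure event and simultaneously in $z$, the circle average $h_r(z)$ with the value of the nesting field of $z$ at the corresponding scale, up to an error $o(\log(1/r))$; this comparison yields $\mathcal{T}_\gamma(h) = \Phi_\gamma(h)$, and the same comparison shows that $\limsup_{r\to 0} h_r(z)/\log(1/r)$ equals the analogous limsup for the nesting field. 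Now, if $z$ is surrounded by only finitely many $\mathrm{CLE}_4$ loops — in particular if $z$ lies on one of the loops — then, conditionally on the nested $\mathrm{CLE}_4$, the behaviour of $h$ near $z$ is governed by a bounded perturbation of the independent Dirichlet GFFs in the finitely many complementary regions around $z$, which forces $\limsup_{r\to 0} h_r(z)/\log(1/r) = 0$ and hence $z\notin T^+$; using the modulus of continuity \eqref{eq_versionintro} one gets this simultaneously at all such $z$. Consequently every point of $T^+$ is surrounded by infinitely many loops and, exactly as for $\Phi_\gamma(h)$, belongs to the complement of the nested $\mathrm{CLE}_4$.

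Combining, $S_h^\xi(\mathbb{D}) \subseteq T^+$ is almost surely contained in the complement of a nested $\mathrm{CLE}_4$, which is almost surely totally disconnected by Theorem~\ref{th_CLE}; hence $S_h^\xi(\mathbb{D})$ is almost surely totally disconnected.

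The step I expect to be the main obstacle is the comparison at the level of limsups rather than limits: one needs the estimates behind Theorem~\ref{th_intro_nestingfield} to be uniform and quantitative enough — an $o(\log(1/r))$ error, uniformly in $z$ on an almost sure event — to carry over to the a priori larger and more irregular family of limsup-thick points, and in particular to guarantee that no point lying on a $\mathrm{CLE}_4$ loop (a set of Hausdorff dimension $3/2$) can be limsup-thick with any positive threshold. If the estimates proving Theorem~\ref{th_intro_nestingfield} apply verbatim to limsups, the remaining steps are immediate; otherwise one must strengthen them, which I expect to be routine but is where the real work lies.
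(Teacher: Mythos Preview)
Your approach is correct and essentially identical to the paper's: transfer the limsup condition from $\tilde h_r$ to $\tilde S_r$ via the uniform comparison, observe that the resulting set avoids every $\mathrm{CLE}_4$ loop, and invoke Theorem~\ref{th_CLE}. Your concern at the end is already resolved by Theorem~\ref{th_CLEGFF}, which states precisely that $\sup_{z\in\mathbb{D}}\limsup_{r\to 0}|\tilde h_r(z)-\tilde S_r(z)|=0$ almost surely; this transfers limsups (not just limits) verbatim, and since $S_r(z)$ is eventually constant whenever $z$ lies on a loop, the detour through the conditional GFF structure near loop points is unnecessary.
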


\subsection{Outline of the proof and further results}

Let us now discuss the proof of Theorem \ref{th_thickpts}. As mentioned above, it relies on a coupling of a nested CLE$_4$, $\Gamma$, as the set of so-called level lines of a GFF $h$ with Dirichlet boundary conditions. Non-nested CLE$_\kappa$ are a family of probability distributions on ensembles of non-nested loops (closed curves) in open and simply connected domains of the complex plane \cite{CLE_She, CLE_Markovian}. They are well-defined for $\kappa \in (8/3,8)$, and connected to the Schramm--Loewner evolution with parameter $\kappa$ via the so-called branching tree construction \cite{CLE_She}. 
The geometry of the loops 
depends on the value of $\kappa$: when $\kappa \in (8/3,4]$, these loops are almost surely simple loops that do not intersect each other or the boundary of $D$; on the contrary, when $\kappa \in (4,8)$, they are almost surely non-simple but non-self-crossing and they may touch (but not cross) the boundary of $D$ and each other. A \emph{nested} CLE$_\kappa$ is constructed from a non-nested CLE$_\kappa$ by iterating the construction of non-nested CLE$_\kappa$ in each loop. That is, when $\kappa \in (8/3,4]$, at each iteration, one constructs a non-nested CLE$_\kappa$ in the interior of each loop drawn at the previous iteration, while when $\kappa \in (4,8)$, one constructs a non-nested CLE$_\kappa$ in the interior of each connected component of each loop drawn at the previous iteration.

Here and in the sequel, when we say that $\Gamma$ is a non-nested CLE$_\kappa$, $\Gamma$ refers to the set defined by the closure of the union of all the loops. However, when we say that $\Gamma$ is a nested CLE$_\kappa$, we mean the collection of all loops -- the closure of the union of all nested loops would just give the full domain. There are many ways to put a topology on this collection of nested loops to obtain a metrizable space (for example, extending the definition in \cite[Section 2.1]{CLE_Markovian} to also keep track of the nesting generation).

An important property of non-nested and nested CLE$_\kappa$ is their conformal invariance in law: if $\varphi: D \to \tilde D$ is a conformal map between two open and simply connected domains of $\mathbb{C}$ and $\Gamma$ is a non-nested, resp. nested, CLE$_\kappa$ in $D$, then $\varphi(\Gamma)$ has the law of a non-nested, resp. nested, CLE$_\kappa$ in $\tilde D$.

The GFF with Dirichlet boundary conditions and  nested CLE$_4$ are deeply connected, \cite{MSCLE}. As this result is at the core of the proof, let us now provide some more details; see also \cite[Section~4]{BTLS}. Set $\lambda:= \sqrt{\pi/8}$  and let $h$ be a Dirichlet GFF in $\mathbb{D}$. Then a non-nested CLE$_4$ $\tilde \Gamma$ in $\mathbb{D}$ can be coupled to $h$ as a so-called local set \cite{localset, IG_1}. The key point is that in this coupling, conditionally on $\tilde \Gamma$, we can decompose
\begin{equation*}
    h = \sum_{j} h^{O_j} + H
\end{equation*}
where the $h^{O_j}$ are independent GFFs with Dirichlet boundary conditions in each simply connected component $O_j$ of $\mathbb{D} \setminus \tilde \Gamma$ and $H$ is a random distribution in $\mathbb{D}$ that is almost surely constant when restricted to each $O_j$, with $\PP(H=-2\lambda \text{ in $O_j$})=\PP(H=2\lambda \text{ in $O_j$})=1/2$ independently for each $j$. Moreover, the fields $h^{O_j}$ and $H$ are independent. This coupling can then be iterated in each $O_j$ with respect to the field $h^{O_j}$ and this eventually gives rise to a coupling between $h$ and a \emph{nested} CLE$_4$ $\Gamma$ in $\mathbb{D}$.

{Another way of defining this coupling is as follows. Let $\Gamma$ be a nested CLE$_4$ in $\mathbb{D}$. For $n \geq 1$, let $\text{Loop}^{(n)}(\Gamma)$ denote the set of loops drawn at iteration $n$ of $\Gamma$ and for $z \in \mathbb{D}$, let $\ell_{z}^{n}$ be the loop of $\text{Loop}^{(n)}(\Gamma)$ surrounding $z$. Define also, for $z \in \mathbb{D}$,
\begin{equation} \label{def_HN}
    H_n(z) = \sum_{j=1}^{n} \xi_{\ell_{z}^{j}}
\end{equation}
where $(\xi_{\ell_{z}^{j}})_{1\leq j \leq n}$ are independent and identically distributed random variables, one for each loop surrounding $z$ until iteration $n$, with for $1 \leq j \leq n$, $\PP(\xi_{\ell_{z}^{j}} = -2\lambda)=\PP(\xi_{\ell_{z}^{j}} = 2\lambda)=1/2$. As $n \to \infty$, the function $z \mapsto H_n(z)$ almost surely converges in the space of distributions to a GFF $h$ with Dirichlet boundary conditions in $\mathbb{D}$, \cite{MSCLE,nesting_lim,BTLS}. In the resulting coupling, for $n \geq 1$, conditionally on all the loops 
and their labels 
up to iteration $n$, the field $h$ can be decomposed as
\begin{equation*}
    h = \sum_{j} h^{O_j} + H_n
\end{equation*}
where the $h^{O_j}$ are independent Dirichlet GFFs in the interior $O_j$ of each loop of $\text{Loop}^{(n)}(\Gamma)$, and $H_n$ is the function defined in \eqref{def_HN}. The fields $h^{O_j}$ and $H_n$ are independent and note that $H_n$ takes values in $\{-2n\lambda, \dots, 2n\lambda\}$ and is constant when restricted to each $O_j$. Moreover, it can be shown that $\Gamma$ and the labels $(\xi_{\ell}, \ell \in \text{Loop}(\Gamma))$ of the loops of $\Gamma$ are deterministic functions of the field $h$ \cite{BTLS}.}

In view of this coupling, heuristically, points in $\Gamma$ should almost surely \emph{not} be thick points of $h$. Therefore, understanding the connectedness properties of $\mathcal{T}_{\gamma}(h)$ amounts to understanding those of the complement of the nested CLE$_4$. Provided this heuristic can be made precise, the other key ingredient needed to prove Theorem \ref{th_thickpts} is the following result, which asserts that the complement of a nested CLE$_{4}$ is totally disconnected. In fact, since there is no extra work involved, we prove the result for arbitrary CLE$_\kappa$ with $\kappa \leq 4$.

\begin{theorem} \label{th_CLE}
Let $\kappa \in (8/3,4]$ and let $\Gamma$ be a nested CLE$_\kappa$ in $\mathbb{D}$. Then the complement of $\Gamma$, i.e., the complement in $\D$ of the union of all loops in $\Gamma$, is almost surely totally disconnected.
\end{theorem}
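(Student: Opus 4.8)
The plan is to show that with probability one, for \emph{every} pair of distinct points $z, w$ in the complement of $\Gamma$, there is a loop of $\Gamma$ separating $z$ from $w$ in $\mathbb{D}$; this immediately gives total disconnectedness, since a connected subset of the complement containing both $z$ and $w$ would have to cross such a loop. By the nesting structure, it suffices to find, for each fixed $z$, arbitrarily small loops around $z$, because any $w\neq z$ will eventually lie outside a sufficiently small loop surrounding $z$. So the core quantitative statement to prove is: almost surely, for every $z$ in the complement of $\Gamma$, the Euclidean diameter of the $n$-th generation loop $\ell_z^n$ surrounding $z$ tends to $0$ as $n\to\infty$.

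The first step is to control a single nesting generation: by conformal invariance of CLE$_\kappa$ and the fact that a non-nested CLE$_\kappa$ loop surrounding a point $z$ in a domain $O$ has, conditionally on $O$, a law depending only on $(O,z)$, one gets that the conformal radius (equivalently, up to Koebe distortion, the inradius) of the connected component containing $z$ shrinks by a multiplicative factor at each generation whose law is that of $1/\mathrm{CR}$ for the innermost CLE loop around the origin in $\mathbb{D}$. This factor is a.s.\ strictly less than $1$ and has good moment/exponential-moment bounds (these are classical and follow from the explicit description of the CLE nesting / the Schramm--Sheffield--Wilson results on CLE conformal radii used in \cite{nesting_lim}). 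Hence for each fixed $z$, $\log(1/\operatorname{inrad}(O_z^n))$ grows at least linearly in $n$ along the nesting, so $\operatorname{inrad}(O_z^n)\to 0$ a.s.\ for each fixed $z$, and in particular $\operatorname{diam}(\ell_z^n)\to 0$ a.s.\ for each fixed $z$.

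The second step upgrades this from "each fixed $z$" to "every $z$ simultaneously", which is the main obstacle, since there are uncountably many points and the exceptional null sets could in principle accumulate. The way to handle this is to run the argument on the scale of \emph{all components at generation $n$} rather than pointwise: let $\delta_n := \sup_{O} \operatorname{diam}(O)$, the supremum over all connected components $O$ of $\mathbb{D}\setminus(\text{loops up to generation }n)$, and show $\delta_n\to 0$ a.s. For this one uses that, conditionally on the configuration up to generation $n-1$, the components at generation $n$ are obtained by sampling independent non-nested CLE$_\kappa$'s inside each generation-$(n-1)$ component, so the diameter reduction can be union-bounded over a covering of $\mathbb{D}$ at a dyadic scale. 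Concretely, fix a small $\eta>0$; a component of diameter $\geq\eta$ at generation $n$ must be contained in a generation-$(n-1)$ component of diameter $\geq\eta$, and inside the latter the probability that the CLE$_\kappa$ leaves an undivided sub-piece of diameter $\geq\eta$ (relative to its own size) is bounded away from $1$ uniformly — because a non-nested CLE$_\kappa$ a.s.\ has loops surrounding every interior point, hence a.s.\ no component of the complement of a single CLE$_\kappa$ layer equals a macroscopic fraction of the domain. Combining this with a Borel--Cantelli / geometric-decay argument over generations, and a net argument over $z$ at scale $\eta$, yields $\PP(\delta_n\geq \eta\text{ i.o.})=0$ for each $\eta$, hence $\delta_n\to 0$ a.s.

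Finally, given $\delta_n\to 0$ a.s., total disconnectedness follows: take $z\neq w$ in the complement of $\Gamma$, pick $n$ with $\delta_n < |z-w|$; then $z$ and $w$ lie in different generation-$n$ components (or one of them lies on a loop, which is even easier), so the boundary loop of $z$'s component separates them, and no connected subset of the complement can contain both. The one technical point to be careful about is the distinction between separation in $\mathbb{D}$ and the possibility of a point lying on a loop of $\Gamma$ (such points are not in the complement, so they are irrelevant), and the fact that for $\kappa\in(8/3,4]$ loops are simple and disjoint, so "the component of $z$" and "the loop surrounding $z$" are unambiguous; this is where the restriction $\kappa\leq 4$ is used.
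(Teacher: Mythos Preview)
Your overall target---showing that $\delta_n:=\sup_O\operatorname{diam}(O)\to 0$ a.s., where the supremum is over connected components of $\mathbb{D}\setminus\Gamma^{(n)}$---is exactly the paper's Lemma~\ref{lemma_eps}, and you correctly identify that this is the heart of the matter. But the key step of your ``second step'' has a real gap.

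You claim that inside a generation-$(n-1)$ component $O$ of diameter $\ge\eta$, the probability that a non-nested CLE$_\kappa$ leaves a sub-piece of diameter $\ge\eta$ is bounded away from $1$ \emph{uniformly in $O$}, and you justify this by saying that CLE$_\kappa$ a.s.\ has loops surrounding every interior point. That justification does not give uniformity: the domains $O$ arising at generation $n-1$ can be arbitrary simply connected subsets of $\mathbb{D}$ with wild geometry (long thin fjords, etc.), and the fact that every point is a.s.\ surrounded says nothing quantitative about how likely it is that some component has diameter close to $\operatorname{diam}(O)$. The inradius/conformal-radius control from your first step is of no help here, since a domain can have tiny inradius and diameter close to $2$. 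This uniformity is precisely the hard estimate, and the paper obtains it (Lemma~\ref{lemma_dbound}) via the Brownian loop-soup construction of CLE$_\kappa$---which is where the restriction $\kappa\in(8/3,4]$ is actually used, not in the simplicity of the loops as you suggest.

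Second, even granting your uniform bound $\PP(N_\eta(\tilde\Gamma_O)\ge 1)\le p<1$, this is \emph{not} enough for a Borel--Cantelli/geometric-decay argument. At generation $n-1$ there may be many large components, and each could spawn several large children; knowing only that each parent has a child with probability $\le p$ does not prevent the total number of large components from growing. What is needed is that the \emph{expected number} of large children per parent is $<1$ uniformly, i.e.\ $\EE[N_{d_0}(\tilde\Gamma_O)]\le c<1$ for all relevant $O$. The paper proves exactly this (Lemma~\ref{lemma_expectation}), combining uniform moment bounds on $N_d$ from the loop-soup with a H\"older argument, and then runs a subcritical Galton--Watson/tree argument on the collection of large loops. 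Your sketch is missing both the uniform moment control and the passage from ``probability $<1$'' to ``expectation $<1$''.
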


The proof of Theorem \ref{th_CLE} uses the coupling between a non-nested CLE$_{\kappa}$ and a Brownian loop soup with intensity $c=c(\kappa)$ \cite{CLE_Markovian} and this is why the result is stated only for $\kappa \in (8/3,4]$. Section \ref{sec_CLE} will be dedicated to its proof.

With Theorem \ref{th_CLE} in hand, the proof of Theorem \ref{th_thickpts} reduces to showing that thick points of the Dirichlet GFF must be contained in the complement of its coupled nested CLE$_4$ loops. We will in fact show something slightly stronger, as already explained informally in Theorem \ref{th_intro_nestingfield}. Namely, that the set of thick points agrees with the set of points where $H_n$, defined in \eqref{def_HN}, grows atypically fast. In particular this means that it cannot include any points on the nested CLE$_4$ loops themselves.

Let us now define this latter set more precisely. It is one example of the set of ``thick points'' of a so-called weighted CLE$_\kappa$ nesting field, introduced and studied in \cite{nesting_extreme, nesting_lim}. So let us make a small detour to explain the general construction. Let $\kappa \in (8/3,8)$ and let $\Gamma$ be a nested CLE$_{\kappa}$ in $D$. Fix a probability distribution $\mu$ on $\mathbb{R}$ with mean $0$ and finite second moment. Conditionally on $\Gamma$, let $(\xi_{\ell})_{\ell \in \Gamma}$ be i.i.d random variables with distribution $\mu$. The associated (truncated) weighted nesting field is defined as, for $z \in D$ and $r>0$,
\begin{equation} \label{def_nestingfield}
    S_r(z) := \sum_{\ell \in \Gamma_r(z)} \xi_{\ell}
\end{equation}
where $\Gamma_r(z)$ is the set of loops in $\Gamma$ that surrounds the ball $B(z,r)$. In particular, if the point $z$ lies on one of the loops of $\Gamma$, then $S_r(z)$ will be constant above some finite value of $r$. {Comparing to above, e.g.\ around \eqref{def_HN}, we see that in the case where $\kappa = 4$ and $\mu$ is Rademacher, the limiting field is just a multiple of the GFF. As shown in \cite{nesting_lim}, this object has a limit more generally as $r \to 0$:} for any fixed $\delta>0$, there exists a $H^{-2-\delta}_{\text{loc}}(D)$--valued random variable $S$ such that for all $f \in \mathcal{C}_{c}^{\infty}(D)$, almost surely $\lim_{r \to 0} \langle S_r, f \rangle = \langle S, f \rangle$. As in the case of the Gaussian free field, $S$ is not defined pointwise but does admit a special set of points, called thick points, at which loosely speaking, $S$ takes atypically high or low values. More precisely, let $\alpha \in \mathbb{R}$. The set of $\alpha$-thick points naturally associated to $S$ is defined to be 
\begin{equation} \label{def_thickCLE}
  \Phi_{\alpha}^{\mu}(\Gamma):=\{ z \in D: \lim_{r \to 0} \frac{S_r(z)}{\log 1/r} = \alpha \}.
\end{equation}

The set of values of $\alpha$ for which $\Phi_{\alpha}^{\mu}(\Gamma)$ is almost surely non-empty depends on $\kappa$ and $\mu$. When $\Phi_{\alpha}^{\mu}(\Gamma)$ is almost surely non-empty, its almost sure Hausdorff dimension is given in terms of the Fenchel-Legendre transforms of $\mu$ and of the law of the difference in log conformal radii between two successive CLE$_\kappa$ loops \cite{nesting_extreme}. Moreover, $\Phi_{\alpha}^{\mu}(\Gamma)$ is conformally invariant, in the sense that if $\varphi$ is a conformal map from $D$ to another simply connected domain, then $\Phi_{\alpha}^{\mu}(\varphi(\Gamma)) = \varphi(\Phi_{\alpha}^{\mu}(\Gamma))$ almost surely. The next result, which follows almost immediately from Theorem \ref{th_CLE}, says that this set is almost surely totally disconnected when $\kappa \in (8/3,4]$ and $\alpha \neq 0$. By conformal invariance, as before, we may restrict ourselves to $D=\mathbb{D}$.

\begin{corollary} \label{th_nestingfield}
Let $\kappa \in (8/3,4]$ and let $\Gamma$ be a nested CLE$_{\kappa}$ in $\mathbb{D}$. Let $\mu$ be a probability distribution on $\mathbb{R}$ such that $\mu$ has $0$ mean and finite second moment. Then 
\begin{equation*}
    \PP( \Phi_{\alpha}^{\mu}(\Gamma) \text{ is totally disconnected } \forall \alpha\in \mathbb{R}\setminus \{0\}) = 1.
\end{equation*}
\end{corollary}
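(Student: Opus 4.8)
The plan is to deduce Corollary \ref{th_nestingfield} directly from Theorem \ref{th_CLE}, using the elementary observation that for $\alpha\neq 0$ the set $\Phi_\alpha^\mu(\Gamma)$ cannot contain any point lying on a loop of $\Gamma$, so that it is contained in the complement of the nested $\mathrm{CLE}_\kappa$, which Theorem \ref{th_CLE} tells us is totally disconnected. Since any subset of a totally disconnected set is totally disconnected, this finishes the proof once the containment is established.

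More precisely, here are the steps. First I would fix $\alpha\neq 0$ and let $z\in D$ be a point lying on some loop $\ell\in\Gamma$, say a loop appearing at nesting generation $k$. By the nesting structure, only finitely many loops of $\Gamma$ surround the ball $B(z,r)$ for $r$ small — indeed every loop surrounding $z$ is either one of the (at most $k$) loops of generations $1,\dots,k$ whose interior contains $z$, and no loop of generation $>k$ surrounds $z$ because $z$ lies on the boundary of the generation-$k$ loop and generation-$(k+1)$ loops are strictly inside it. Hence $\Gamma_r(z)$ stabilises to a finite set as $r\to 0$, so $S_r(z)$ is eventually constant and $\lim_{r\to 0} S_r(z)/\log(1/r)=0\neq\alpha$; thus $z\notin\Phi_\alpha^\mu(\Gamma)$. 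This shows $\Phi_\alpha^\mu(\Gamma)\subseteq D\setminus\bigcup_{\ell\in\Gamma}\ell$, the complement of $\Gamma$ in the sense of Theorem \ref{th_CLE}.

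Next, by Theorem \ref{th_CLE}, the event $E$ that $D\setminus\bigcup_{\ell\in\Gamma}\ell$ is totally disconnected has probability one (note this event depends only on $\Gamma$, not on the labels $(\xi_\ell)$). On $E$, every subset of the complement of $\Gamma$ is totally disconnected, in particular $\Phi_\alpha^\mu(\Gamma)$ is totally disconnected for each $\alpha\neq 0$; since the containment argument of the previous paragraph is deterministic given $\Gamma$ and the labels, this holds simultaneously for all $\alpha\in\mathbb{R}\setminus\{0\}$ on the same probability-one event $E$. This gives exactly the claimed statement. By the conformal invariance of both nested $\mathrm{CLE}_\kappa$ and the sets $\Phi_\alpha^\mu$, there is no loss in having taken $D=\mathbb{D}$.

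There is essentially no obstacle here beyond Theorem \ref{th_CLE} itself: the only point requiring a small amount of care is the claim that a point on a generation-$k$ loop is surrounded by only finitely many loops of $\Gamma$, which uses that the loops of $\mathrm{CLE}_\kappa$ for $\kappa\in(8/3,4]$ are simple and pairwise disjoint and do not touch the domain boundary, so that nested loops of later generations lie in the \emph{open} interior of earlier ones and a boundary point of a loop is not surrounded by any strictly-later loop. Everything else is either a direct appeal to Theorem \ref{th_CLE} or the trivial fact that subsets of totally disconnected sets are totally disconnected.
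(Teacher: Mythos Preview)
Your proposal is correct and follows essentially the same approach as the paper: both deduce the result from Theorem \ref{th_CLE} via the observation that any point lying on a loop of $\Gamma$ has $S_r(z)$ eventually constant, hence $\tilde S_r(z)\to 0$, so such a point cannot belong to $\Phi_\alpha^\mu(\Gamma)$ for $\alpha\neq 0$. The only cosmetic difference is that the paper passes through the auxiliary sets $\Phi_a^{\mu,+}=\{z:\liminf_{r\to 0}\tilde S_r(z)\ge a\}$ and $\Phi_a^{\mu,-}$ and then takes a countable intersection over $a=1/n$ to obtain the statement for all $\alpha$ simultaneously, whereas you observe (more directly) that the containment $\Phi_\alpha^\mu(\Gamma)\subseteq \mathbb{D}\setminus\bigcup_\ell \ell$ holds deterministically for every $\alpha\neq 0$, so a single application of Theorem \ref{th_CLE} already gives the simultaneous statement.
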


To complete the proof of Theorem \ref{th_thickpts}, observe that {as mentioned in the discussion before Theorem \ref{th_CLE} describing the coupling of a Dirichlet GFF $h$ with a nested CLE$_4$}, $h$ \emph{is} the limiting weighted nesting field associated to the CLE, with labels given by the $\xi_{l_z^j}$s from \eqref{def_HN}.
In other words, if for $z \in \mathbb{D}$ and $r>0$ we set 
\begin{equation} \label{def_nesting4}
   S_r(z) = \sum_{\ell \in \Gamma_{r}(z)} \xi_{l_z^{n(\ell)}} = H_{J_{z,r}^{\cap}-1}(z)
\end{equation}
where $n(\ell)$ is such that $\ell \in \text{Loop}^{(n(\ell))}(\Gamma)$ and $J_{z,r}^{\cap}$ is the nesting depth of the first loop in $\Gamma$ intersecting $B(z,r)$, {then $z \to S_r(z)$ defines a weighted nesting field}. In what follows, we refer to this special instance of $(S_r(z))_{r,z}$ as the weighted CLE$_4$ nesting field coupled to $h$. We write $\Phi_\gamma(h)$ for the associated set of $\gamma$-thick points as in \eqref{def_thickCLE}. Observe that Corollary \ref{th_nestingfield} implies that $\Phi_\gamma(h)$ is almost surely totally disconnected for all $\gamma\in \mathbb{R}\setminus\{0\}$ since the distribution of the labels in this case is given by $\mu(\{2\lambda\}) =\mu(\{-2\lambda\})=1/2$, which is indeed centered with finite second moment.

Given Corollary \ref{th_nestingfield}, the following relation between $h$ and $S$ will allow us to conclude the proof of Theorem \ref{th_thickpts}. Here and in the sequel, we set for $z \in \mathbb{D}$ and $r>0$,
\begin{equation} \label{def_tilde}
    \tilde h_r(z) = \frac{h_r(z)}{\log 1/r} \quad \text{and} \quad \tilde S_r(z) = \frac{S_r(z)}{\log 1/r},
\end{equation}
where as usual, we work with the jointly H\"{o}lder continuous version of the circle average process $(h_r(z))_{r,z}$ described in the discussion around \eqref{eq_versionintro}.

\begin{theorem} \label{th_CLEGFF}
Let $h$ be a GFF with Dirichlet boundary conditions in $\mathbb{D}$, 
let $(h_r(z))_{r,z}$ be its circle average field and let $(S_r(z))_{r,z}$ be its coupled weighted nesting CLE$_4$ field. Then 
\begin{equation*}
    \sup_{z \in \mathbb{D}} \, \limsup_{r \to 0} \, \vert \tilde h_{r}(z) - \tilde S_r(z) \vert = 0 \quad \text{almost surely.}
\end{equation*}
\end{theorem}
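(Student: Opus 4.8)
The plan is to use the $\CLE_4$ decomposition of $h$ to rewrite $h_r(z)-S_r(z)$ as the circle average of a \emph{single small} Dirichlet GFF, and then to bound this quantity using Brownian motion estimates together with the control on the conformal radii of nested $\CLE_4$ loops.

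\textbf{Step 1 (reduction to a residual circle average).} For $z\in\mathbb{D}$ and $n\ge1$ let $O_z^{(n)}$ be the interior of the loop of $\mathrm{Loop}^{(n)}(\Gamma)$ surrounding $z$, and set $r_n(z):=\dist(z,\partial O_z^{(n)})$, a strictly decreasing sequence tending to $0$. A loop surrounds $B(z,r)$ exactly when it is one of $\ell_z^1,\dots,\ell_z^{K(z,r)}$ with $K(z,r)=\max\{n:r\le r_n(z)\}$, and one checks (using that distinct loops of the same generation have disjoint closures of their interiors) that the first loop of $\Gamma$ meeting $B(z,r)$ is then at generation $K(z,r)+1$; hence $S_r(z)=H_{J_{z,r}^{\cap}-1}(z)=H_{K(z,r)}(z)=\sum_{j\le K(z,r)}\xi_{\ell_z^j}$. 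On the other hand, whenever $\overline{B(z,r)}\subset O_z^{(n)}$, the coupling recalled around \eqref{def_HN} gives $h_r(z)=h^{O_z^{(n)}}_r(z)+H_n(z)$, since $H_n\equiv H_n(z)$ on $O_z^{(n)}$, $\partial B(z,r)\subset O_z^{(n)}$, and the circle averages over $\partial B(z,r)$ of the remaining components vanish. Taking $n=K(z,r)$ and subtracting yields the key identity
\begin{equation*}
h_r(z)-S_r(z)=h^{O_z^{(K(z,r))}}_r(z),
\end{equation*}
valid for all $z\in\mathbb{D}$ and $r<r_{K(z,r)}(z)$ (the endpoint value of $r$ being handled by continuity of the circle average in $r$). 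Thus Theorem~\ref{th_CLEGFF} becomes $\sup_{z\in\mathbb{D}}\limsup_{r\to0}\big|h^{O_z^{(K(z,r))}}_r(z)\big|/\log(1/r)=0$ almost surely.

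\textbf{Step 2 (a fixed point surrounded by infinitely many loops).} Fix such a $z$. Conditionally on all loops and labels up to generation $n$, $h^{O_z^{(n)}}$ is a Dirichlet GFF in $O_z^{(n)}$, so $t\mapsto h^{O_z^{(n)}}_{e^{-t}}(z)$ is a fixed multiple of a standard Brownian motion, started from $0$ at time $-\log\CR(z,O_z^{(n)})$. By the Koebe quarter theorem $\CR(z,O_z^{(n)})\asymp r_n(z)$, so for $r\in(r_{n+1}(z),r_n(z)]$ the value $h^{O_z^{(n)}}_r(z)$ is an increment of this Brownian motion over a time window of length at most $\delta_n(z)+O(1)$, where $\delta_n(z):=\log\!\big(\CR(z,O_z^{(n)})/\CR(z,O_z^{(n+1)})\big)$ is the $n$-th log conformal radius increment of the loops around $z$. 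For fixed $z$ these increments are i.i.d.\ with finite positive mean $\bar\delta$ and an exponential moment (see \cite{nesting_lim,nesting_extreme} and the conformal radius computations used there); hence $-\log r_n(z)\sim\bar\delta\,n$ a.s.\ (so that $\log(1/r)\asymp n$ throughout the $n$-th scale window) and $\delta_n(z)=O(\log n)$ a.s. Combining a Gaussian tail bound for the supremum of $|\mathrm{BM}|$ over an interval of length $L$ with these two facts and Borel--Cantelli gives $\sup_{r\in(r_{n+1}(z),r_n(z)]}|h^{O_z^{(n)}}_r(z)|=O\!\big(\sqrt{\delta_n(z)\log n}\,\big)=o(n)$ a.s., hence $\limsup_{r\to0}|h^{O_z^{(K(z,r))}}_r(z)|/\log(1/r)=0$ for this $z$.

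\textbf{Step 3 (uniformity in $z$ and the exceptional points).} Two points remain. First, the bound of Step~2 must hold for all $z$ at once. Since at generation $n$ there are only countably many loops and $O_z^{(n)}$ is constant on each of them, it is enough, for fixed $\epsilon>0$ and each generation-$n$ loop $O$, to control $\sup_{z\in O}\sup_{r\in(r_{n+1}(z),r_n(z)]}|h^O_r(z)|$ and to sum the resulting failure probabilities over loops, over $n$, and over a dyadic discretisation of $r$; the oscillation of $z\mapsto h^O_r(z)$ between nearby points is absorbed using the joint H\"older estimate \eqref{eq_versionintro} transported to $O$, so that one may work on a fine net, after which the tails come from the conditional Gaussian bounds together with the exponential-moment bound on $\delta_n(z)$ applied simultaneously on the net. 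Second, one must treat the (Lebesgue-null, but possibly non-empty) set of $z$ surrounded by only finitely many loops: there $K(z,r)$ stabilises at some $m=m(z)$ and $h_r(z)-S_r(z)=h^{O_z^{(m)}}_r(z)$ is the circle average of a \emph{fixed} residual GFF as $r\to0$, so one needs the GFF to have a.s.\ no $\gamma$-thick point ($\gamma\ne0$) in the $\CLE_4$ gasket of its coupled nested $\CLE_4$. By the self-similarity of the construction this is a single statement about a GFF in $\mathbb{D}$, which follows from a variant of the same multi-scale union bound using that the generation-$m$ gasket has Hausdorff dimension strictly less than $2$.

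\textbf{Main obstacle.} Steps~1--2 are soft; the real work is the uniformity in Step~3. The delicate point is balancing the spatial entropy at scale $r$ against the Gaussian cost of an atypically large residual average. This succeeds precisely because the control on the $\CLE_4$ conformal radii confines the residual average to a window of $\asymp\log\log(1/r)$ scales, so that its typical size is $\asymp\sqrt{\log\log(1/r)}=o(\log(1/r))$; ruling out thick points inside the connected, positive-dimensional $\CLE_4$ gasket is the other part where one genuinely needs the finer structure of the coupling rather than a bare dimension count.
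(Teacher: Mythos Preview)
Your Steps~1 and~2 are correct, and the identity $h_r(z)-S_r(z)=h^{O_z^{(K(z,r))}}_r(z)$ is a natural starting point. The problems are both in Step~3.

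\emph{Uniformity.} Conditioning on $\Gamma^{(K(z,r))}$ leaves a residual Gaussian whose variance is $\log\CR(z,O_z^{(K(z,r))})-\log r$, which for $r$ at the bottom of its window is $\asymp\delta_n(z)$. This is \emph{not} uniformly bounded: for a fixed $z$ the exponential moment gives $\delta_n(z)=O(\log n)$, but over a net of size $e^{O(n)}$ the maximum of $\delta_n(\cdot)$ is only $O(n)$. If you optimise your split (Gaussian tail $\exp(-c\epsilon^2(\log 1/r)^2/M)$ versus $\PP(\delta_n>M)\lesssim e^{-cM}$) you obtain an unconditional tail of order $\exp(-c'\epsilon\log(1/r))=r^{c'\epsilon}$, which does \emph{not} beat the spatial entropy $r^{-2}$ once $\epsilon$ is small. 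There is also a genuine dependence issue you pass over: $\delta_n(z)$ is a function of the CLE inside $O_z^{(n)}$, which in the coupling is measurable with respect to $h^{O_z^{(n)}}$ itself, so the ``window length'' and the residual Brownian motion are not independent. The paper in fact singles out exactly this difficulty: conditioning only up to level $J^{\cap}_{z,r}-1=K(z,r)$ leaves a GFF \emph{conditioned} on where its next CLE$_4$ loop lands, which is hard to control.

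\emph{Gasket points.} Your proposed dimension argument cannot work. The CLE$_4$ carpet has dimension $15/8$, while $\gamma$-thick points have dimension $2-\gamma^2/2$, which exceeds $15/8$ for every $|\gamma|<1/2$; since the carpet and $h$ are coupled, no independence-based intersection bound is available either. The statement you need --- that $h$ has no nonzero-thick points on its coupled CLE$_4$ carpet --- is precisely Corollary~\ref{cor_BTLS}, which in the paper is \emph{deduced from} Theorem~\ref{th_CLEGFF}, so invoking it here is circular.

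The paper resolves both issues with one move: condition one level \emph{deeper}, on $\Gamma^{(J^{\cap}_{z,r})}$. The residual is then a sum of independent Dirichlet GFFs over the loop interiors meeting $\partial B(z,r)$, and Lemma~\ref{lemma_varsum} shows --- via monotonicity of the Green function and Koebe, using that the carpet is path-connected so $\dist(z,\partial(\tilde O\cup B(z,r)))\le r$ --- that the total conditional variance is \emph{deterministically} at most $\log 4$. This gives a tail $\exp(-c(\epsilon\log(1/r))^2)$ decaying faster than any power of $r$, so the union bound over a polynomial grid closes for every $\epsilon>0$. The passage from grid points to arbitrary $z$ is handled separately by the H\"older estimate~\eqref{eq_versionintro} and an annulus-crossing bound for CLE$_4$; since the fixed grid points are a.s.\ surrounded by infinitely many loops, the gasket never has to be treated directly.
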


In particular, Theorem \ref{th_CLEGFF} implies Theorem \ref{th_intro_nestingfield}.

Finally, from Theorem \ref{th_CLEGFF}, we deduce a side result about the thickness of a special class of local sets for the GFF, called bounded-type local sets (BTLS), which were introduced in \cite{BTLS} and further studied in \cite{TVS}. Recall that if $A$ is a local set coupled to a GFF $h$, then conditionally on $A$, $h=h_A+H_A$ where $A$ is a GFF with Dirichlet boundary conditions in $D \setminus A$ and $H_A$ is almost surely a harmonic function when restricted to $D \setminus A$ \cite[Section~4.2]{book_GFF}. BTLS are then defined as follows. Let $h$ be a Dirichlet GFF in an open and simply connected domain $D \subset \mathbb{C}$. A set $A$ is said to be a BTLS coupled to $h$ if
\begin{itemize}
    \item there exists a constant $K > 0$ such that almost surely $\vert H_A \vert \leq K$ in $D \setminus A$;
    \item $A$ is a thin local set, that is for all $f \in \mathcal{C}_c^{\infty}(D)$, conditionally on $A$, $(h,f)=(h_A, f)+\int_{D \setminus A} H_A(x)f(x)dx$;
    \item almost surely each connected component of $A$ that does not intersect $\partial D$ has a neighbourhood that does intersect no other connected component of $A$.
\end{itemize}
The two-valued sets of a GFF $h$ are particular examples of BTLS: for $a, b >0$ such that $a+b\geq 2\lambda$, the two-valued set $\TVS_{-a,b}$ is the only local set coupled to $h$ such that the corresponding harmonic function $H_{\TVS_{-a,b}}$ takes values in $\{-a,b\}$ \cite{TVS}.

\begin{corollary} \label{cor_BTLS}
Let $K > 0$. Let $h$ be a GFF in $\mathbb{D}$ with Dirichlet boundary conditions and let $A$ be a $K$-BTLS coupled to $h$. Then, for any $\gamma \in [-2,2]\setminus \{0\}$,
\begin{equation*}
    \{ z \in \mathbb{D}: \lim_{r \to 0} \frac{h_r(z)}{\log 1/r} = \frac{\gamma}{\sqrt{2\pi}}, z \in A \} = \emptyset \quad \text{almost surely.}
\end{equation*}
\end{corollary}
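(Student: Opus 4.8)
The plan is to deduce Corollary~\ref{cor_BTLS} from Theorem~\ref{th_CLEGFF} (equivalently, from the identification $\mathcal T_\gamma(h)\equiv\Phi_\gamma(h)$ of Theorem~\ref{th_intro_nestingfield}) together with the structure of two-valued sets. Concretely, I will show that for \emph{every} $z\in A$ the coupled weighted $\mathrm{CLE}_4$ nesting field $r\mapsto S_r(z)$ of \eqref{def_nesting4} stays bounded as $r\to0$. Granting this, if some $z\in A$ were a $\gamma$-thick point with $\gamma\neq0$, then by Theorem~\ref{th_CLEGFF} we would have $\tilde S_r(z)=S_r(z)/\log(1/r)\to\gamma/\sqrt{2\pi}\neq0$, which is impossible once $\sup_{r>0}|S_r(z)|<\infty$. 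Hence $\{z\in\mathbb{D}:\lim_{r\to0}h_r(z)/\log(1/r)=\gamma/\sqrt{2\pi},\ z\in A\}=\emptyset$ almost surely, for every $\gamma\in[-2,2]\setminus\{0\}$.

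To establish the boundedness of $r\mapsto S_r(z)$ on $A$, I would first reduce to the case of a two-valued set. Since $A$ is a $K$-BTLS, its harmonic function satisfies $|H_A|\le K$, so by the absorption (maximality) property of two-valued sets, \cite{TVS} (see also \cite{BTLS}), there is a coupling of $A$ with the \emph{same} field $h$ in which $A\subseteq\TVS_{-a,a}$, where I fix $a=2n\lambda$ with $n\in\mathbb{N}$ large enough that $2n\lambda>K$. As the event $\{\mathcal T_\gamma(h)\cap A=\emptyset\ \forall\,\gamma\neq0\}$ depends only on the joint law of $(h,A)$, it then suffices to prove $\sup_{r>0}|S_r(z)|\le 2n\lambda$ for every $z\in\TVS_{-2n\lambda,2n\lambda}$.

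This is where the $\mathrm{CLE}_4$--GFF coupling recalled around \eqref{def_HN} is used. Recall that $\TVS_{-2n\lambda,2n\lambda}$ is built from the nested $\mathrm{CLE}_4$ $\Gamma$ coupled to $h$ by keeping, along each branch, all loops up to and including the first loop $\ell$ at which the accumulated label $H_m(\cdot)$ of \eqref{def_HN} first hits $\{-2n\lambda,2n\lambda\}$ (there is no overshoot, as $H_m$ moves on the lattice $2\lambda\mathbb{Z}$), and then stopping inside $\ell$; moreover the labels of these loops are precisely the $\pm2\lambda$-valued i.i.d.\ variables appearing in $S_r$ via \eqref{def_nesting4}. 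Fix $z\in\TVS_{-2n\lambda,2n\lambda}$. If the branch of $\Gamma$ through $z$ reaches such a loop $\ell$, at generation $m_0$ say, then $z$ cannot lie strictly inside $\ell$ -- otherwise $z$ would be in a complementary component of the two-valued set -- so $z\in\ell$; since $\mathrm{CLE}_4$ loops are simple and lie at positive distance from the boundary of the domain in which they are drawn, for small $r$ the only loops of $\Gamma$ meeting $B(z,r)$ have generation $\ge m_0$, whence $J_{z,r}^{\cap}=m_0$ and $S_r(z)=H_{m_0-1}(z)\in(-2n\lambda,2n\lambda)$ by the choice of $m_0$. If instead the branch through $z$ never hits $\{-2n\lambda,2n\lambda\}$, then $H_m(z)\in(-2n\lambda,2n\lambda)$ for every $m$, and since $S_r(z)=H_{J_{z,r}^{\cap}-1}(z)$ we again obtain $|S_r(z)|<2n\lambda$. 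In both cases $\sup_{r>0}|S_r(z)|\le 2n\lambda$, as desired.

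I expect the main obstacle to be the first ingredient of the second step, namely the absorption statement that $|H_A|\le K$ forces $A\subseteq\TVS_{-a,a}$ (in a suitable coupling) as soon as $a>K$. This needs to be pinned down in precisely the form required -- in particular allowing connected components of $A$ that touch $\partial\mathbb{D}$ -- or else derived from the monotone-coupling construction of two-valued sets and their characterisation as the unique maximal local set whose harmonic function is valued in $[-a,a]$. A secondary, bookkeeping-level point is to make fully precise the identification of the loops and labels of $\TVS_{-2n\lambda,2n\lambda}$ with those of $\Gamma$; but this is exactly the content of the coupling recalled before Theorem~\ref{th_CLE}, and requires no new input.
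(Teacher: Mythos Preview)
Your proposal is correct and follows essentially the same route as the paper: reduce to a two-valued set via the BTLS absorption property, realise that set inside the nested $\mathrm{CLE}_4$ coupled to $h$, and conclude that $S_r(z)$ stays bounded on it, so that Theorem~\ref{th_CLEGFF} forces $\tilde h_r(z)\to 0$ there.

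Two small remarks. First, the ``obstacle'' you flag is resolved in the paper exactly as you anticipate, by citing \cite[Proposition~3]{BTLS}; note however that the precise hypothesis used there is $K\le 2\lambda(M-1)$ (equivalently $a\ge K+2\lambda$), not merely $a>K$. Second, the paper bypasses your case analysis by observing directly that the construction gives $\TVS_{-2M\lambda,2M\lambda}\subset\bigcup_{n\ge 1}\Gamma^{(n)}$ almost surely, and then invoking the argument from the proof of Corollary~\ref{th_nestingfield} (any $z\in\bigcup_n\Gamma^{(n)}$ has $\tilde S_r(z)\to 0$). Your more explicit bound $|S_r(z)|<2n\lambda$ reaches the same conclusion and has the minor advantage that it transparently handles the exceptional branches along which the label random walk never hits $\pm 2n\lambda$.
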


This paper is structured as follows. In Section \ref{sec_CLE}, we prove Theorem \ref{th_CLE} and Corollary \ref{th_nestingfield}. Then, in Section \ref{sec_GFF}, we turn to the proof of Theorem \ref{th_CLEGFF}, thus completing the proof of Theorem \ref{th_thickpts}. This section ends with the proofs of Proposition \ref{prop_singular} and Corollary \ref{cor_BTLS}.

\subsection*{Acknowledgements} The authors would like to thank N.~Berestycki for many interesting and fruitful discussions. J.~Aru was supported by Eccellenza grant 194648 of the Swiss National Science Foundation. L.~Papon was supported by an EPSRC doctoral studentship. E.~Powell was supported in part by UKRI future leaders fellowship MR/W008513/1.

\section{The complement of nested CLE is almost surely totally disconnected} \label{sec_CLE}

Let $\kappa \in (8/3,4]$ and let $\Gamma$ be a nested CLE$_{\kappa}$ in $\mathbb{D}$. For $n \in \mathbb{N}$, we say that a loop $\ell \in \Gamma$ has depth $n$ if it is surrounded by exactly $n-1$ loops of $\Gamma$ and we denote by $\text{Loop}^{(n)}(\Gamma)$ the set of such loops. We set $\Gamma^{(1)} = \overline{\cup_{\ell \in \text{Loop}^{(1)}(\Gamma)} \ell}$ and iteratively define
\begin{equation*}
    \Gamma^{(n+1)} = \overline{\Gamma^{(n)} \cup \cup_{\ell \in \text{Loop}^{(n+1)}(\Gamma)} \, \ell}.
\end{equation*}
The following properties of nested CLE$_\kappa$, $\kappa \in (8/3,4]$, were established in \cite{CLE_Markovian}. For each $n$, $\mathbb{D} \setminus \Gamma^{(n)}$ {almost surely} consists of infinitely many open and simply connected components. These components are the interiors $(\text{int} \, (\ell_j))_j$ of the loops of $\text{Loop}^{(n)}(\Gamma)$. In particular, their boundaries are {almost surely} continuous simple loops in $\mathbb{D}$. These loops {almost surely} do not intersect each other and if $\ell_1$ is a loop of depth $n$ and $\ell_2$ a loop of depth $n+1$ surrounded by $\ell_1$, then $\ell_2$ {almost surely} does not intersect $\ell_1$. Moreover, since CLE$_\kappa$ is locally finite, for each $n$ and $\epsilon > 0$, {almost surely} only finitely many connected components of $\mathbb{D} \setminus \Gamma^{(n)}$ have diameter larger than $\epsilon$. In the sequel, the number of such connected components will be a quantity of interest and it is convenient to introduce the following notation: if $D \subset \mathbb{C}$ is a simply connected domain,  $U$ a subset of $D$ and $0<x<\text{diam}(D)$, $N_x(U)$ 
denotes the number of connected components of $D \setminus {U}$ that have diameter larger than $x$. In particular, if $\tilde \Gamma$ is a non-nested CLE$_\kappa$ in $D$ and $0<x<\text{diam}(D)$, $N_x(\tilde \Gamma)$ stands for the number of connected components of $D \setminus \tilde \Gamma$ that have diameter larger than $x$. 

Theorem \ref{th_CLE} will follow from the following lemma.

\begin{lemma} \label{lemma_eps}
Let $\kappa \in (8/3,4]$ and let $\Gamma$ be a nested CLE$_{\kappa}$ in $\mathbb{D}$. For any $\epsilon > 0$, there {almost surely} exists $n \in \mathbb{N}$ such that {$N_\eps(\Gamma^{(n)})=0.$}
\end{lemma}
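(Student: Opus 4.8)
The plan is to reduce the statement to a single quantitative fact about one non-nested $\mathrm{CLE}_\kappa$: there exists a deterministic $\theta = \theta(\kappa) < 1$ such that for a non-nested $\mathrm{CLE}_\kappa$ $\tilde\Gamma$ in a simply connected domain $D$ and any $\delta \in (0,1)$, the expected total diameter of the components of $D \setminus \tilde\Gamma$ of diameter at least $\delta\,\mathrm{diam}(D)$ is at most $\theta\,\mathrm{diam}(D)$ — equivalently, each component of $D \setminus \tilde\Gamma$ shrinks the ``ambient scale'' by a definite factor in expectation. Here is where the Brownian loop soup coupling enters: a non-nested $\mathrm{CLE}_\kappa$ with $\kappa \in (8/3,4]$ is the collection of outermost cluster boundaries of a Brownian loop soup of intensity $c(\kappa) \le 1$ \cite{CLE_Markovian}, and the loop soup almost surely has clusters that are not all small — in particular the loop soup is nonempty and its clusters have positive diameter, so $D \setminus \tilde\Gamma$ is not all of $D$; a scale-invariance and second-moment argument upgrades this ``not everything'' statement to the desired uniform contraction factor $\theta < 1$.

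Granting this, I would argue by a branching/iteration estimate. Index the components of $\D \setminus \Gamma^{(n)}$ by finite words recording which loop one passes inside at each generation; write $d_w$ for the diameter of the component $O_w$ at generation $n=|w|$. Conditionally on generation $n$, inside each $O_w$ one samples an independent non-nested $\mathrm{CLE}_\kappa$ (conformally mapped in), and the components $O_{w'}$ at generation $n+1$ inside $O_w$ satisfy $\sum_{w': \mathrm{diam}(O_{w'}) \ge \delta d_w} \mathrm{diam}(O_{w'}) \le \theta\, d_w$ in conditional expectation by the single-scale fact above (applied with the ratio $\delta$). Summing over $w$ and iterating gives $\EE\big[\sum_{|w|=n} \mathrm{diam}(O_w)\big] \le \theta^n\,\mathrm{diam}(\D)^{\text{(something)}}$ — more carefully, one runs the estimate so that at generation $n$ the total diameter of components that were ``never small'' (i.e. at every generation had diameter at least an $\eps$-fraction, say, of their parent, or more simply, just components of diameter $\ge \eps$ that arose by always-not-too-small splittings) decays geometrically. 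Then $\EE[N_\eps(\Gamma^{(n)})] \le \eps^{-1}\,\EE\big[\sum \mathrm{diam}(O_w)\big] \to 0$, and since $N_\eps(\Gamma^{(n)})$ is a nonincreasing $\mathbb{N}$-valued sequence in $n$, it is eventually $0$ almost surely, which is the claim. One has to take a little care that a component of diameter $\ge \eps$ at generation $n$ must have had all its ancestors of diameter $\ge \eps$, so it is genuinely captured by the ``never small'' bookkeeping; this is immediate since diameters are nonincreasing along the genealogy.

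Two technical points need attention. First, conformal distortion: when we push a non-nested $\mathrm{CLE}_\kappa$ into an irregularly-shaped component $O_w$ via a uniformizing map, Euclidean diameters of the image loops are not simply related to diameters in the reference disc, so one wants the single-scale contraction statement phrased directly in terms of Euclidean diameters and valid uniformly over all simply connected $D$ — this is where one invokes conformal invariance of $\mathrm{CLE}_\kappa$ together with the Koebe distortion theorem, or better, phrases everything in terms of conformal radii (for which the contraction is exact and distortion-free via the Schramm–Sheffield–Wilson / nesting results \cite{nesting_lim}) and only at the very end converts conformal radius bounds to diameter bounds using Koebe's $1/4$-theorem, which costs only a universal constant factor and is harmless. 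Second, one must make sure the constant $\theta < 1$ is genuinely uniform and does not degrade as $\delta \to 0$; in fact taking $\delta$ as small as $\eps/\mathrm{diam}(\D)$ suffices since the genealogy has depth growing with $n$, but cleanest is to fix the contraction at, say, $\delta = 1/2$ and note that a component of diameter $\ge \eps$ after $n$ generations forces at least $\log_2(\mathrm{diam}(\D)/\eps)$-many ``large'' splittings somewhere along its lineage — actually the simplest route is just: total diameter at generation $n$ has expectation $\le \theta^{n}\,\mathrm{diam}(\D)$ (no $\delta$ at all, using the full loop-soup contraction), hence $\EE[N_\eps(\Gamma^{(n)})] \le \theta^n \mathrm{diam}(\D)/\eps \to 0$. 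I expect the main obstacle to be establishing the uniform single-scale contraction $\theta < 1$ with the right uniformity in the domain — i.e. making precise and quantitative the intuitively obvious statement that ``a positive fraction of any domain is eaten by the first generation of $\mathrm{CLE}_\kappa$ loops'' — rather than the iteration itself, which is a routine Borel–Cantelli-type argument once the contraction is in hand.
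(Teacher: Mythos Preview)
Your branching-process framework matches the paper's, and you correctly locate the crux at a single-scale contraction estimate; the gap is that none of the versions you propose can actually be established. The ``simplest route'' $\EE[\sum_{w} \mathrm{diam}(O_w)]\le \theta\,\mathrm{diam}(D)$ with no cutoff requires the sum of \emph{all} loop-interior diameters in one non-nested $\mathrm{CLE}_\kappa$ to be finite and smaller than $\mathrm{diam}(D)$; you give no argument for either, and since a single $\mathrm{CLE}_\kappa$ already has infinitely many loops there is no reason to expect this. The ratio-cutoff version asks for $\EE[N_{\mathrm{diam}(D)/2}(\tilde\Gamma_D)]<1$ uniformly over \emph{all} simply connected $D$, and your suggested fixes do not deliver this: Koebe compares conformal radius to \emph{inradius}, not diameter, and the ratio $\mathrm{diam}(O)/\mathrm{CR}(z,O)$ is unbounded over simply connected $O$ (long thin domains); likewise the nesting-field conformal-radius results control $\mathrm{CR}(z,\ell_z^n)$ for a \emph{fixed} $z$, hence $\mathrm{dist}(z,\ell_z^n)$, but not $\mathrm{diam}(\ell_z^n)$. (A smaller point: $N_\eps(\Gamma^{(n)})$ is not nonincreasing in $n$ --- one large parent can split into several large children --- though the event $\{N_\eps(\Gamma^{(n)})=0\}$ \emph{is} increasing, which is what you actually need.)

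The paper resolves the uniformity problem by abandoning ratio estimates and anchoring instead to the fixed ambient disc. Every domain arising in the iteration is a subset of $\mathbb{D}$, so a loop of diameter exceeding some absolute $d_0$ close to $\mathrm{diam}(\mathbb{D})=2$ must approach $\partial\mathbb{D}$; via the loop-soup coupling this forces a Brownian-loop chain to cross a fixed boundary annulus of $\mathbb{D}$, and the BK inequality gives exponential tails for the number of such crossings, uniformly in $D\subset\mathbb{D}$ (Lemma~\ref{lemma_dbound}). A H\"older argument then pushes $d_0$ close enough to $2$ that $\EE[N_{d_0}(\tilde\Gamma_D)]\le c<1$ uniformly (Lemma~\ref{lemma_expectation}), so the tree of loops with diameter $>d_0$ is dominated by a subcritical Galton--Watson tree; once it dies, every survivor fits in a strictly smaller disc, one rescales, and repeats until the scale drops below $\eps$. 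The idea missing from your proposal is precisely this anchoring to the ambient disc rather than to the uncontrolled shape of the parent loop.
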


{Assuming this lemma, it is straightforward to} deduce Theorem \ref{th_CLE}.

\begin{proof}[Proof of Theorem \ref{th_CLE} {given Lemma \ref{lemma_eps}}]
Let $\kappa \in (8/3,4]$ and let $\Gamma$ be a nested CLE$_{\kappa}$ in $\mathbb{D}$. We have
\begin{align*}
    \PP(\text{the complement of } \Gamma \text{ is not totally disconnected}) & 
    \leq {\PP(\exists \epsilon > 0 : N_\eps(\cup_{m \in \mathbb{N}^{*}} \Gamma^{(m)})\ne 0)} \\
    &= 0
\end{align*}
where the last {equality} follows from Lemma \ref{lemma_eps} and the fact that $N_\eps(\cup_{m\in \mathbb{N}^*} \Gamma^{(m)})\le N_\eps(\Gamma^{(n)})$ for any fixed $n$. This completes the proof of Theorem \ref{th_CLE} assuming Lemma \ref{lemma_eps}.
\end{proof}

We now turn to the proof of Corollary \ref{th_nestingfield}, which is a consequence of {Theorem \ref{th_CLE}} and of the definition of the set of thick points of a weighted nesting CLE$_\kappa$ field.

\begin{proof}[Proof of Corollary \ref{th_nestingfield}]
For any $a>0$, let $\Phi_{a}^{\mu,+}:=\{z\in \mathbb{D}: \liminf_{r\to 0} \tilde{S}_r(z) \ge a\}$. Then since any $z\in \cup_{m\in \mathbb{N}^*}\Gamma^{(m)}$ has $\liminf_{r\to 0} \tilde{S}_r(z)=0$ by definition, we have that $\Phi_{a}^{\mu,+}\subseteq \D \setminus \cup_{m\in \mathbb{N}^*} \Gamma^{(m)}$. It therefore follows from Theorem \ref{th_CLE} that the event $A_a^+=\{\Phi_a^{\mu,+} \text{ is totally disconnected} \}$ has probability one. The same conclusion holds if we define, {for any $a<0$,} $A_a^-$ analogously, with $\Phi_{a}^{\mu,-}:=\{z\in \mathbb{D}: \limsup_{r\to 0} \tilde{S}_r(z) \le a\}$. Writing the event in the statement of the theorem as $\cup_{n\in \mathbb{N}^*} A_{1/n}^+\cup A_{1/n}^-$, we deduce the result.
\end{proof}

We now turn to the proof of Lemma \ref{lemma_eps}. The idea is to encode the large loops of a nested CLE$_{\kappa}$, $\kappa \in (8/3,4]$, into a tree. For $\epsilon > 0$, the 
vertices of this tree will be the loops of $\Gamma$ that have diameter larger than $\epsilon$ (and two vertices will be connected by an edge if and only if the two corresponding loops differ by exactly one level of nesting and one surrounds the other).  Showing Lemma \ref{lemma_eps} will then amount to show that this tree {almost surely} has finite depth. To carry out this strategy, we will need to estimate quantities of the type $\EE[N_{d}(\tilde \Gamma)]$, where $\tilde \Gamma$ is a non-nested CLE$_\kappa$ and $d>0$. This requires {us} to understand how large loops in a non-nested CLE$_{\kappa}$ are formed, and this is where the restriction to $\kappa \in (8/3,4]$ plays a crucial role: we relate large loops in a non-nested CLE$_{\kappa}$ to crossing events in a Brownian loop-soup of intensity $c(\kappa)$. This strategy would not allow us to deal with the case $\kappa \in (4,8)$ as the Brownian loop-soup construction of non-nested CLE$_\kappa$ does not extend to these values of $\kappa$ \cite{CLE_Markovian}. The following two auxiliary lemmas provide the properties of $\EE[N_{d}(\tilde \Gamma)]$, $d>0$, that will be instrumental in the proof of Lemma \ref{lemma_eps}. 

\begin{lemma} \label{lemma_dbound}
Let $\kappa \in (8/3,4]$. There exists $d=d(\kappa) > 1.9$ such that the following holds. For each $p \in \mathbb{N}^{*}$, there exists a constant $C_p=C_p(\kappa) < \infty$ such that if {$D\subseteq \mathbb{D}$} is an open and simply connected domain 
with $\text{diam}(D)>d$, 
and $\tilde \Gamma_D$ is a non-nested CLE$_{\kappa}$ in $D$, then
\begin{equation*}
    \EE[N_d(\tilde \Gamma_D)^p] \leq C_p.
\end{equation*}
{In particular, the constant $C_p$ does not depend on $D$.}
\end{lemma}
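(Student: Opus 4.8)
The plan is to reduce bounds on $N_d(\tilde\Gamma_D)$ — the number of connected components of $D\setminus\tilde\Gamma_D$ of diameter $>d$ — to a statement about crossings in a Brownian loop soup, using the coupling from \cite{CLE_Markovian}. Recall that a non-nested $\mathrm{CLE}_\kappa$, $\kappa\in(8/3,4]$, is obtained from a Brownian loop soup $\mathcal{L}$ of intensity $c=c(\kappa)\in(0,1]$ in $D$ by taking the outer boundaries of the outermost clusters of loops. The first observation is geometric: if $O$ is a component of $D\setminus\tilde\Gamma_D$ with $\mathrm{diam}(O)>d$, then $O$ is the interior of a $\mathrm{CLE}_\kappa$ loop, which is the outer boundary of a loop-soup cluster $\mathcal K$ with $\mathrm{diam}(\mathcal K)\geq \mathrm{diam}(O)>d$. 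Hence $N_d(\tilde\Gamma_D)$ is at most the number $M_d(\mathcal L)$ of loop-soup clusters in $D$ of diameter larger than $d$. So it suffices to prove $\mathbb{E}[M_d(\mathcal L)^p]\leq C_p$ for a suitable $d=d(\kappa)>1.9$, uniformly over simply connected $D\subseteq\mathbb{D}$ with $\mathrm{diam}(D)>d$.

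Next I would choose $d$ very close to $2$ (the diameter of $\mathbb{D}$), so close that a cluster of diameter $>d$ is forced to ``cross'' a fixed family of annuli or crossing regions sitting inside $\mathbb{D}$. Concretely: pick finitely many disjoint round annuli $A_1,\dots,A_K$ (or topological rectangles) positioned near the boundary of $\mathbb{D}$ so that any connected set of diameter $>d$ inside $\mathbb{D}$ must contain a crossing — a connected subset joining the two boundary components — of at least one $A_i$. A connected subset of diameter $>d$ is ``almost all of $\mathbb{D}$'', so $K$ can be taken finite (depending only on $d$, hence on $\kappa$), and crucially this family can be chosen once and for all inside $\mathbb{D}$, independently of which simply connected $D\subseteq\mathbb{D}$ we look at. For a loop-soup cluster to cross $A_i$, the loop soup restricted to a neighbourhood of $A_i$ must contain a chain of loops realizing that crossing. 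By restriction of the Brownian loop soup (the intensity measure restricted to loops staying in $D$ is dominated by the one for loops staying in $\mathbb{D}$, or just in the neighbourhood of $A_i$), the law of the loop soup near $A_i$ is stochastically dominated by a $D$-independent loop soup, and the probability — and higher moments of the number — of such crossing clusters is controlled by a $D$-independent quantity. The key quantitative input is that a Brownian loop soup of subcritical intensity $c<1$ has exponential tails for the number of loops crossing a fixed annulus (equivalently, the cluster sizes are well-controlled); this is standard loop-soup percolation input and gives $\mathbb{E}[(\#\{\text{clusters crossing }A_i\})^p]\leq C_{p,i}<\infty$. Summing over $i=1,\dots,K$ and using $M_d(\mathcal L)\leq \sum_i \#\{\text{clusters crossing }A_i\}$ together with the triangle/power-mean inequality $(\sum_i a_i)^p\leq K^{p-1}\sum_i a_i^p$ yields $\mathbb{E}[M_d(\mathcal L)^p]\leq K^{p-1}\sum_i C_{p,i}=:C_p$, which depends only on $\kappa$ and $p$.

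Finally, I should double-check the precise value of $d$: we need $d>1.9$, which is comfortable since we are taking $d$ arbitrarily close to $2$; the only constraint is that the finite annular family exists, which holds for any $d<2$. The main obstacle I anticipate is the second step — carefully setting up the $D$-uniform domination. One must argue that even though $D$ varies, the loop soup in $D$ can be coupled to (a restriction of) a fixed loop soup in $\mathbb{D}$ in such a way that ``a cluster of diameter $>d$ in $D$'' maps to a genuine crossing event of one of the fixed annuli, with the right monotonicity. Since $D\subseteq\mathbb{D}$, the loop measure for $D$ is dominated by that for $\mathbb{D}$, so one can realize the $D$-soup as a sub-collection of the $\mathbb{D}$-soup; a cluster in the sub-collection is contained in a cluster of the full soup, so its diameter only decreases, and the crossing-counting bound for the $\mathbb{D}$-soup dominates. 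The slightly delicate point is that $N_d(\tilde\Gamma_D)$ counts components of $D\setminus\tilde\Gamma_D$, which are interiors of outermost-cluster outer boundaries \emph{in} $D$; one must verify that such a component of large diameter indeed forces a large-diameter cluster, and this is exactly the geometric observation above (the outer boundary of a cluster has diameter at least that of its interior). Once these monotonicity points are pinned down, the rest is the standard subcritical loop-soup tail estimate applied to finitely many fixed annuli.
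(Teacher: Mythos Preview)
Your overall strategy matches the paper's: reduce to the loop-soup coupling, cover by finitely many annuli near $\partial\mathbb{D}$, show that every large cluster must cross one of them, and then control the number of crossings uniformly in $D$ via the restriction coupling $\mathcal{L}_D\subset\mathcal{L}_{\mathbb{D}}$. The paper uses boundary half-annuli $A(w,r_1,r_2)$ with $w\in\partial\mathbb{D}$ and the fact that loop-soup clusters do not touch $\partial\mathbb{D}$ (valid for all $c\in(0,1]$, including the critical case $c=1$ at $\kappa=4$; note your phrase ``subcritical intensity $c<1$'' is slightly off here), but your variant with interior annuli would work just as well once the key estimate is in place.

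There is, however, a genuine gap in your monotonicity step. You argue that since $\mathcal{L}_D\subset\mathcal{L}_{\mathbb{D}}$, the number of $\mathcal{L}_D$-clusters crossing $A_i$ is dominated by the corresponding count for $\mathcal{L}_{\mathbb{D}}$. This is false: adding loops can merge clusters, so many distinct $\mathcal{L}_D$-clusters crossing $A_i$ may sit inside a single $\mathcal{L}_{\mathbb{D}}$-cluster. The functional ``number of clusters crossing $A_i$'' is \emph{not} increasing in the loop configuration, and stochastic domination of the point process does not transfer to it. Consequently the uniformity in $D$ does not follow from your argument as written, and the appeal to ``standard subcritical loop-soup percolation input'' does not cover this.

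The paper fixes exactly this point via the BK inequality for Poisson point processes. If $N_d(\tilde\Gamma_D)\geq j$ then, by pigeonhole, some annulus $A$ is crossed by at least $\lceil j/|\mathcal{A}|\rceil$ \emph{disjoint} chains of loops in $\mathcal{L}_D$; this is the disjoint occurrence $\mathcal{C}(\mathcal{L}_D;1,A)\circ\cdots\circ\mathcal{C}(\mathcal{L}_D;1,A)$, and BK gives
\[
\PP\big(\mathcal{C}(\mathcal{L}_D;k,A)\big)\ \le\ \PP\big(\mathcal{C}(\mathcal{L}_D;1,A)\big)^{k}\ \le\ \PP\big(\mathcal{C}(\mathcal{L}_{\mathbb{D}};1,A)\big)^{k}=:p_{\mathbb{D}}^{\,k}.
\]
Here monotonicity is used only for the \emph{single} crossing event $\mathcal{C}(\cdot;1,A)$, which \emph{is} increasing in the loop soup, so the bound is uniform in $D$. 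This yields the exponential tail $\PP(N_d(\tilde\Gamma_D)\ge j)\le |\mathcal{A}|\,p_{\mathbb{D}}^{\lceil j/|\mathcal{A}|\rceil}$ and hence all moments. If you replace ``clusters crossing $A_i$'' by ``maximal number of disjoint chains crossing $A_i$'' and invoke BK, your sketch goes through and becomes essentially the paper's proof.
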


\begin{proof}
The proof relies on the loop-soup construction of the non-nested CLE$_{\kappa}$, $\kappa \in (8/3,4]$. Let $\kappa \in (8/3,4]$. By Theorem 1.6 in \cite{CLE_Markovian}, there exists a unique $c=c(\kappa) \in (0,1]$ such that a non-nested CLE$_{\kappa}$ $\tilde \Gamma$ can be constructed from a Brownian loop-soup $\LL$ with intensity $c$. More precisely, if $D$ is an open and simply connected domain in $\mathbb{C}$, we can couple a non-nested CLE$_{\kappa}$ $\tilde \Gamma_{D}$ in $D$ and a Brownian loop-soup $\LL_{D}$ in $D$ with intensity $c$ in $D$ in such a way that the loops of $\tilde \Gamma_{D}$ correspond to the outermost boundaries of the outermost clusters of loops in $\LL_{D}$. Using this coupling, we are going to show that there exists $d=d(\kappa)$ such that for any $D \subset \mathbb{D}$, the probability that there exist $j$ loops with diameter larger than $d$ in a non-nested CLE$_\kappa$ $\tilde \Gamma_D$ in $D$ decays exponentially fast, uniformly in $D \subset \mathbb{D}$. From this, we will be able to show that the moments of $N_d(\tilde \Gamma_D)$ are uniformly bounded in the domain $D \subset \mathbb{D}$.

Consider the coupling $(\tilde \Gamma_{\mathbb{D}}, \LL_{\mathbb{D}})$ described above {with} $D=\mathbb{D}$. For $w \in \partial \mathbb{D}$ and $0<r_1<r_2<1$, we let
\begin{equation*}
    A(w,r_1,r_2) = \{ z \in \mathbb{D}: r_1 < \vert z-w \vert < r_2 \}
\end{equation*}
denote the "boundary annulus" 
centered at $w$ (on the boundary of $\mathbb{D}$) with inner radius $r_1$ and outer radius $r_2$. For $k \in \mathbb{N}^{*}$ and a loop-soup $\LL$, we also define the event
\begin{equation*}
    \mathcal{C}(\LL; k,w,r_1,r_2) = \{ \text{there exist $k$ disjoint
 chains of loops in $\LL$ that cross $A(w,r_1,r_2)$} \}
\end{equation*}
where a chain of loops in $\LL$ is defined as a sequence $(\ell_k)_k$ of loops in $\LL$ such $\ell_k \cap \ell_{k+1} \neq \emptyset$ {for each $k$}. By the results in \cite{CLE_Markovian}, the loops of $\LL_{\mathbb{D}}$ {almost surely} do not intersect $\partial \mathbb{D}$. This implies that for any $r_2 < 1$ and $w \in \partial \mathbb{D}$,
\begin{equation} \label{cvg_ann}
    \PP(\mathcal{C}(\LL_{\mathbb{D}}; 1, w,\frac{1}{n}, r_2)) \to 0 \quad \text{as } n \to \infty.
\end{equation}
Indeed, the sequence $\mathcal{C}(\LL_{\mathbb{D}}; 1, w,\frac{1}{n}, r_2)_{n \geq 1}$ is a decreasing sequence of events and if 
the intersection over $n$ occurs, there must exist a cluster of loops in $\LL_{D}$ whose closure intersects the boundary of $\mathbb{D}$. By \cite[Lemma~9.4]{CLE_Markovian}, this event has zero probability. Since $\lim_{n \to \infty} \PP(\mathcal{C}(\LL_{\mathbb{D}}; 1, w,\frac{1}{n}, r_2)) = \PP(\lim_{n \to \infty} \mathcal{C}(\LL_{\mathbb{D}}; 1, w,\frac{1}{n}, r_2))$, we deduce that the convergence \eqref{cvg_ann} holds.

Let us now fix $r_2 < 1$ and $w \in \partial \mathbb{D}$. From the previous convergence, we deduce that there exists $r_1 \in (0,r_2)$ such that $\PP(\mathcal{C}(\LL_{\mathbb{D}}; 1,w,r_1,r_2))<1$. Moreover, by conformal (rotational) invariance of the Brownian loop-soup, $r_1$ does not depend on $w$. Hence, we may set $p_{\mathbb{D}}:=\PP(\mathcal{C}(\LL_{\mathbb{D}}; 1,w,r_1,r_2))$, for this particular choice of $r_1$, $r_2$ (and arbitrary $w\in \partial \mathbb{D}$).

With $r_1$ and $r_2$ as above, we are now going to surround $\partial \mathbb{D}$ from the inside of $\mathbb{D}$ by a collection $\mathcal{A}(\mathbb{D})$ of boundary annuli of inner radius $r_1$ and outer radius $r_2$. Pick a point $w_1 \in \partial \mathbb{D}$ and add $A(w_1,r_1,r_2)$ to $\mathcal{A}(\mathbb{D})$. Then let $w_2$ be the most clockwise intersection point of $\partial B(w_1,r_1)$ with $\partial \mathbb{D}$. Add $A(w_2,r_1,r_2)$ to $\mathcal{A}(\mathbb{D})$. Continue this procedure until the newly added boundary annulus intersects the inner radius of $A(w_1,r_1,r_2)$. The obtained collection $\mathcal{A}(\mathbb{D})=\{A(w_j,r_1,r_2)\}_j$ contains $O(r_1^{-1})$ boundary annuli and is such that any boundary point of $\mathbb{D}$ lies inside the inner circle of some $A(w_j,r_1,r_2)$ in $\mathcal{A}(\mathbb{D})$. 

\begin{figure}
      \includegraphics[width=0.5\textwidth]{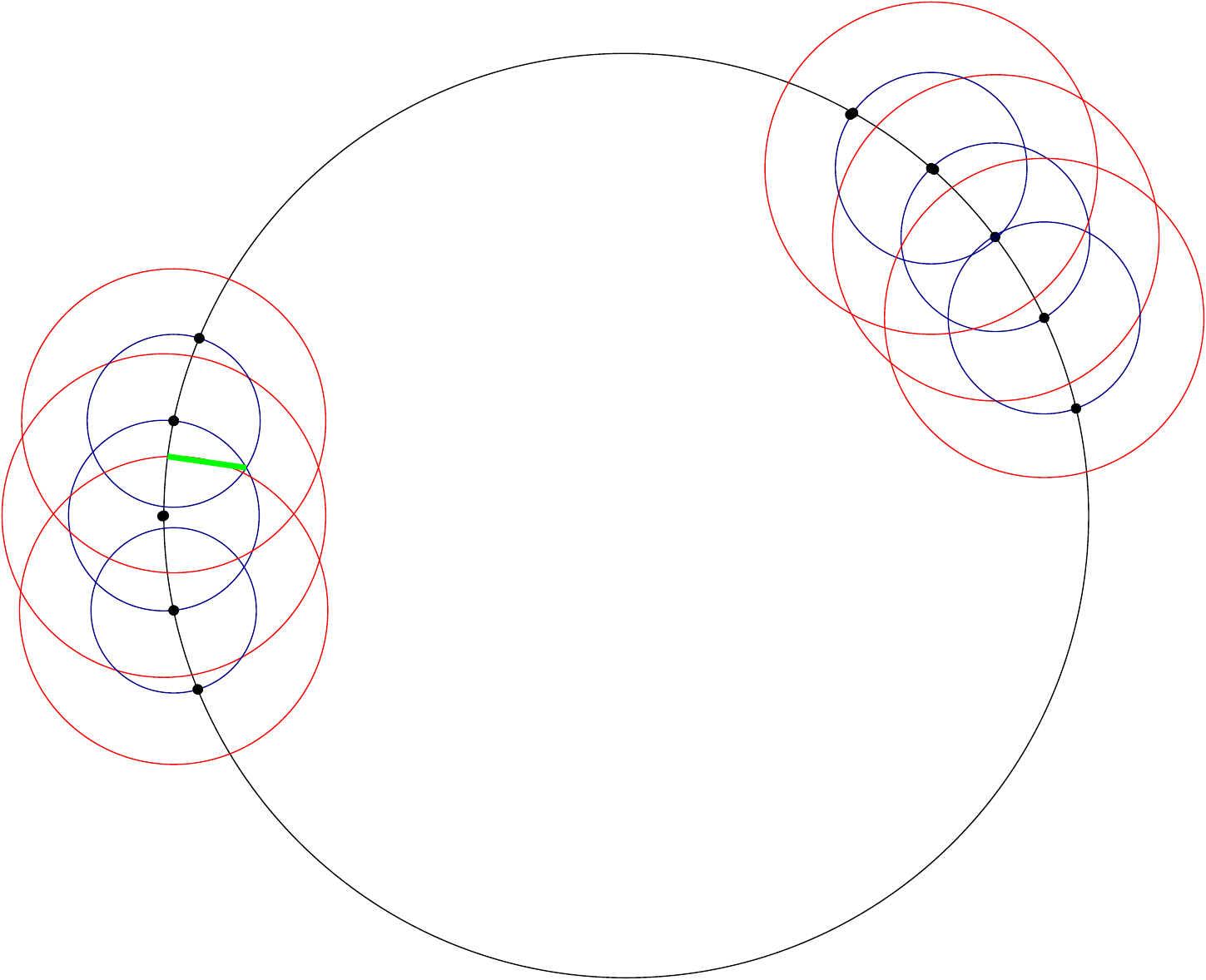}
      \includegraphics[width=0.5\textwidth]{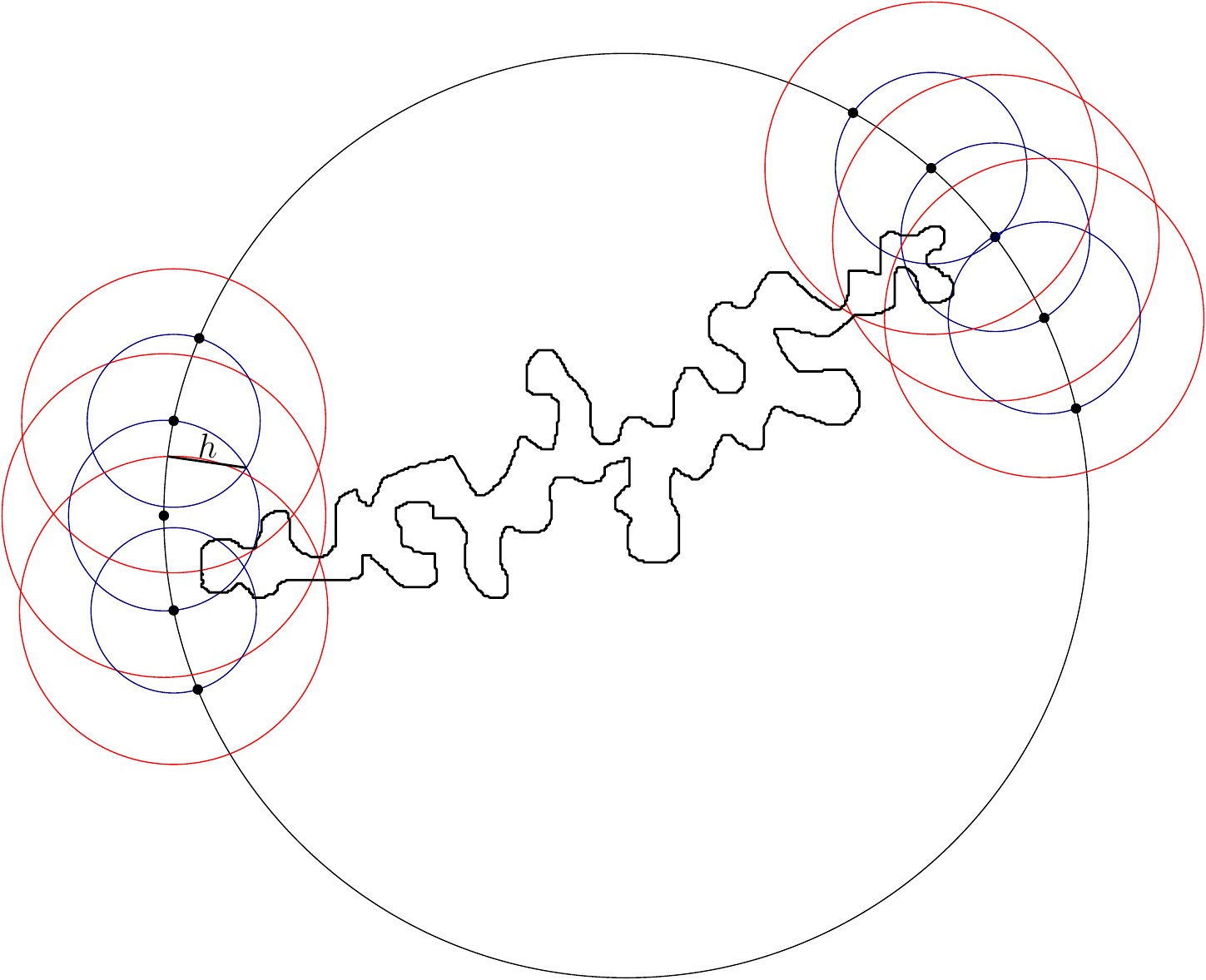}
  \caption{On the left, the unit disk and a subset of the cover $\mathcal{A}(\mathbb{D})$. The red circles have radius $r_2$ and the blue circles $r_1$. The green line represents the distance $h$. On the right, a large $\mathrm{CLE}_{\kappa}$ loop is drawn: it intersects the interior of two different blue circles.}
  \label{fig_cover}
\end{figure}

Let $h$ denote the distance between $\partial \mathbb{D}$ and the intersection points of the inner circles of any two adjacent boundary annuli in $\mathcal{A}(\mathbb{D})$, see Figure \ref{fig_cover}. By construction, $h$ is the same for any pair of adjacent boundary annuli. Set $d= \max\{2-h, 1.9\}$. 

Let $D \subset \mathbb{D}$ be an open and simply connected domain such that $\text{diam}(D) > d$. We define the following covering of $\partial D$
\begin{equation} \label{cover_D}
    \mathcal{A}(D) := \{A = A(w,r_1,r_2 ) \in \mathcal{A}(\mathbb{D}): B(w,r_1) \cap D \neq \emptyset \}.
\end{equation}
Notice that $\vert \mathcal{A}(D) \vert \leq \vert \mathcal{A}(\mathbb{D}) \vert$. We couple a non-nested CLE$_{\kappa}$ $\tilde \Gamma_{D}$ in $D$ and a Brownian loop-soup of intensity $c(\kappa)$ in such a way $\tilde \Gamma_D$ corresponds to the outermost boundaries of the outermost clusters of loops in $\LL_D$. Observe that a loop in $\tilde \Gamma_{D}$ with diameter larger than $d$ must intersect at least two different boundary annnuli in $\mathcal{A}(D)$ and must actually cross one of these two boundary annuli.  Since, in the coupling, the loops of $\tilde \Gamma_{D}$ are the outer boundaries of the outermost clusters of loops of $\LL_{D}$, the existence of such a loop in $\tilde \Gamma_{D}$ implies that a chain of loops in $\LL_{D}$ crosses a boundary annulus in $\mathcal{A}(D)$. Moreover, if there are $j$ disjoint CLE$_4$ loops in $\tilde \Gamma_{D}$, then there must $j$ disjoint chains of loops in $\LL_{D}$: these $j$ disjoint CLE$_4$ loops correspond to the outer boundaries of $j$ disjoint outermost clusters of $\LL_{D}$. To upper bound the probability that there exist such $j$ disjoint clusters in $\LL_{D}$, we observe that for $j \geq 1$, by the pigeon hole principle,
\begin{equation*}
    \{ \exists j \, \text{disjoint clusters in } \LL_{D} \} \subset \{ \exists A \in \mathcal{A}(D) \, \text{such that }  \mathcal{C}(\LL_{D};\lceil \frac{j}{\vert \mathcal{A}(D) \vert}\rceil, w,r_1, r_2) \, \text{occurs}\}.
\end{equation*}

Therefore, for $j \geq 1$, by a union bound,
\begin{align} \label{ineq_PdiamCLE}
    \PP(N_{d}(\tilde \Gamma_{D}) \geq j) & \leq \PP \bigg(\bigcup_{A=A(w,r_1,r_2) \in \mathcal{A}(D)}  \mathcal{C}(\LL_{D}; \lceil \frac{j}{\vert \mathcal{A}(D) \vert}\rceil, w,r_1, r_2) \bigg) \nonumber \\
    &\leq \sum_{A=A(w,r_1,r_2) \in \mathcal{A}(D)} \PP \bigg(\mathcal{C}(\LL_{D};\lceil \frac{j}{\vert \mathcal{A}(D) \vert}\rceil, w,r_1, r_2) \bigg).
\end{align}
To upper bound the probabilities appearing on the right hand side of (\ref{ineq_PdiamCLE}), we apply the BK inequality for Poisson point processes \cite{BK_ineq}. This allows us to upper bound the probability that there exist $k$ crossings of a given boundary annulus by disjoint chains of loops of $\LL_D$ as follows. Given a realisation $\omega$ of the Brownian loop soup, we say that two increasing events $A$ and $B$ occur disjointly, and denote this event by $A \circ B$, if there exist two disjoint subsets $K=K(\omega)$, $L=L(\omega)$  of $D$ such that $\omega_{K} \in A$ and $\omega_{L} \in B$, where $\omega_{K}$, respectively $\omega_{L}$, denotes the realisation $\omega$ restricted to $K$, respectively to $L$. The BK inequality then states that $\PP(A \circ B) \leq \PP(A)\PP(B)$. For $k \geq 1$, the event $\mathcal{C}(\LL_{D}; k,w,r_1,r_2)$ corresponds to the disjoint occurence of $k$ increasing events:
\begin{equation*}
    \mathcal{C}(\LL_{D}; k,w,r_1,r_2) = \mathcal{C}(\LL_{D}; 1,w,r_1,r_2) \circ \dots \circ \mathcal{C}(\LL_{D}; 1,w,r_1,r_2).
\end{equation*}
Therefore, for any $w \in \partial \mathbb{D}$ and $k \in \mathbb{N}^{*}$, 
\begin{equation} \label{BK_D}
    \PP(\mathcal{C}(\LL_{D}; k, w,r_1, r_2)) \leq \PP(\mathcal{C}(\LL_{D}; 1, w,r_1, r_2))^{k} = e^{-c_{D}k}
\end{equation}
where we have set $c_{D}:= -\log(\PP(\mathcal{C}(\LL_{D}; 1, w,r_1, r_2)))$. 

It is also not hard to see that 
\begin{equation} \label{ineq_cd}
    c_D \geq c_{\mathbb{D}} =-\log p_\mathbb{D} > 0.
\end{equation}
Indeed, this follows from the fact that $\LL_D$ and $\LL_{\mathbb{D}}$ can be coupled so that $\LL_D\subseteq \LL_{\mathbb{D}}$ almost surely (due to the restriction property of the Brownian loop measure \cite{BLS}).
In this coupling, if $A=A(w,r_1,r_2) \in \mathcal{A}(D)$, then the existence of a chain of loops in $\LL_D$ that crosses $A$ implies the existence of a chain of loops in $\LL_{\mathbb{D}}$ that crosses $A$.  
 Therefore
\begin{equation*}
    \PP(\mathcal{C}(\LL_D; 1,w,r_1,r_2)) \leq \PP(\mathcal{C}(\LL_{\mathbb{D}}; 1,w,r_1,r_2)) =p_\mathbb{D},
\end{equation*}
which is equivalent to  (\ref{ineq_cd}).

Going back to (\ref{ineq_PdiamCLE}), the inequality (\ref{BK_D}) yields
\begin{equation} \label{ineq_expD}
    \PP(N_{d}(\tilde \Gamma_{D} \geq j)) \leq \vert \mathcal{A}(D) \vert \exp \bigg(-c_{D} \bigg \lceil \frac{j}{\vert \mathcal{A}(D) \vert}  \bigg \rceil \bigg).
\end{equation}
The statement of Lemma \ref{lemma_dbound} follows from this. Indeed, for $p \in \mathbb{N}^{*}$,
\begin{align*}
    \EE[N_d(\tilde \Gamma_D)^p] &= \sum_{k \geq 1} k^p \PP(N_d(\tilde \Gamma_D) = k) \\
    & \leq \sum_{k\geq 1} k^p \PP(N_d(\tilde \Gamma_D) \geq k) \\
    &\leq \sum_{j \geq 0} \, \sum_{k = j \vert \mathcal{A}(D) \vert + 1}^{(j+1) \vert \mathcal{A}(D) \vert} k^p \vert \mathcal{A}(D) \vert e^{-c_{D}(j+1)}
\end{align*}
where we used (\ref{ineq_expD}) in the last inequality. Upper bounding $\vert \mathcal{A}(D) \vert$ by $\vert \mathcal{A}(\mathbb{D}) \vert$ and remembering (\ref{ineq_cd}), we obtain that
\begin{align*}
    \EE[N_d(\tilde \Gamma_D)^p] &\leq  \vert \mathcal{A}(\mathbb{D}) \vert e^{-c_{\mathbb{D}}} \sum_{j \geq 0} (j+1)^p \vert \mathcal{A}(D) \vert^{p+1} e^{-c_{\mathbb{D}}j} \\
    &\leq \vert \mathcal{A}(\mathbb{D}) \vert^{p+2} e^{-c_{\mathbb{D}}} \sum_{j \geq 0} (j+1)^p e^{-c_{\mathbb{D}}j},
\end{align*}
and the series on the right-hand side is finite. This concludes the proof of Lemma \ref{lemma_dbound}.
\end{proof}

The uniform bound on the moments of $N_d(\tilde \Gamma_D)$ established in Lemma \ref{lemma_dbound} allows us to apply H\"older inequality to derive the following result. As one may guess, this result will be the key to show the {almost sure} finiteness of the tree constructed in the proof of Lemma \ref{lemma_eps}.

\begin{lemma} \label{lemma_expectation}
Let $\kappa \in (8/3,4]$ and $d=d(\kappa)$ be as given by Lemma \ref{lemma_dbound}. Then there exists $d_0=d_0(\kappa) \in [d,2)$ and $c<1$ such that the following holds. {Let $D\subseteq \mathbb{D}$} be an open and simply connected domain 
with $\text{diam}(D)>d$. 
Let $\tilde \Gamma_D$ be a non-nested CLE$_{\kappa}$ in $D$. Then

\begin{equation*}
    \EE[N_{d_0}(\tilde \Gamma_D)]\le c<1.
\end{equation*}
In particular, $c<1$ does not depend on $D$.
\end{lemma}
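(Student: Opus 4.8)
The plan is to interpolate between two facts: on the one hand, we want to show that as $d_0 \uparrow 2$ the quantity $\EE[N_{d_0}(\tilde\Gamma_D)]$ becomes small, and on the other hand we want a bound that is uniform over all simply connected $D \subseteq \D$ with $\mathrm{diam}(D) > d$. For the first point, observe that a non-nested $\mathrm{CLE}_\kappa$ in $D$ can have \emph{at most one} loop of diameter arbitrarily close to $\mathrm{diam}(D) \le 2$; more usefully, $N_{d_0}(\tilde\Gamma_D)$ is a non-increasing function of $d_0$, and for $d_0$ close to $2$, the event $\{N_{d_0}(\tilde\Gamma_D)\ge 1\}$ forces the existence of a very large loop, which is a rare event. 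The cleanest way to quantify this is to first reduce to $D = \D$ via the loop-soup domain monotonicity already used in the proof of Lemma \ref{lemma_dbound}: coupling $\LL_D \subseteq \LL_{\D}$, any $\mathrm{CLE}_\kappa$ loop in $D$ of diameter $> d_0$ is surrounded by (the outer boundary of) a cluster of $\LL_D$, hence by a cluster of $\LL_{\D}$; a cluster of $\LL_{\D}$ of diameter $> d_0$ must cross one of the boundary annuli in the fixed cover $\mathcal A(\D)$ (taking $d_0 \ge 2-h$, which is why we need $d_0 \ge d$), and the number of CLE loops in $D$ of diameter $> d_0$ is at most the number of $\LL_{\D}$-clusters of diameter $> d_0$. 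So it suffices to bound $\EE[M_{d_0}]$ where $M_{d_0}$ is the number of outermost $\LL_{\D}$-clusters of diameter $> d_0$, a quantity not depending on $D$ at all.

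Next, I would make $\EE[M_{d_0}]$ small by choosing $d_0$ close to $2$. Write $M_{d_0} \le \sum_{A \in \mathcal A(\D)} (\text{number of disjoint chains of loops crossing } A)$, so $\EE[M_{d_0}] \le |\mathcal A(\D)| \cdot \EE[X_A]$ where $X_A$ is the number of disjoint crossing chains of the annulus $A$ — but here $A$ must be taken with inner and outer radii adjusted so that diameter-$d_0$ sets are forced to cross, i.e. annuli of the form $A(w, \rho_1(d_0), \rho_2)$ with $\rho_1(d_0) \to 0$ as $d_0 \to 2$. By the BK inequality as in \eqref{BK_D}, $\PP(X_A \ge k) \le \PP(X_A \ge 1)^k$, so $\EE[X_A] \le \PP(X_A\ge 1)/(1-\PP(X_A\ge 1))$, and crucially $\PP(X_A \ge 1) = \PP(\mathcal C(\LL_{\D}; 1, w, \rho_1(d_0), \rho_2)) \to 0$ as $d_0 \to 2$ by exactly the argument for \eqref{cvg_ann} (the limiting event would produce a cluster touching $\partial\D$, which has probability zero by \cite[Lemma~9.4]{CLE_Markovian}). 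Hence $\EE[M_{d_0}] \le |\mathcal A(\D)| \cdot \PP(X_A\ge 1)/(1-\PP(X_A\ge 1))$ can be made $< 1$ by taking $d_0 < 2$ sufficiently close to $2$ (and then, if necessary, replacing it by $\max\{d_0, d\}$, noting monotonicity of $N_{d_0}$ in $d_0$ keeps the bound valid). Set $c$ to be this value; it depends only on $\kappa$ through the cover and the loop-soup intensity.

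The main obstacle — really the only delicate point — is making precise the geometric reduction "a CLE loop of diameter $> d_0$ forces a chain of $\LL_{\D}$-loops to cross some fixed annulus of the cover, and disjoint loops force disjoint chains", with inner radius depending on $d_0$ rather than the fixed $r_1$ from Lemma \ref{lemma_dbound}. One has to check that the cover $\mathcal A(\D)$ can be taken with a single outer radius $\rho_2 < 1$ and a family of inner radii $\rho_1(d_0)\downarrow 0$, that $h = h(d_0)$ (the overlap parameter) can be kept bounded below so that $d_0 \ge 2 - h(d_0)$ is achievable for $d_0$ near $2$, and that the pigeonhole/BK steps go through verbatim. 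Since $|\mathcal A(\D)| = O(\rho_1(d_0)^{-1})$ grows as $d_0 \to 2$, one must also confirm that $\PP(X_A \ge 1)$ decays fast enough to beat this polynomial growth — but this follows because $\PP(X_A\ge 1) \to 0$ while $|\mathcal A(\D)|$ for a \emph{fixed} choice of $d_0$ is just a finite constant: we do not need a decay rate, we simply fix one good value of $d_0$ at the end. This is a routine adaptation of the machinery already assembled in the proof of Lemma \ref{lemma_dbound}, so I expect the write-up to be short.
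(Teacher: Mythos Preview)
There is a genuine gap. Your final bound is
\[
\EE[N_{d_0}(\tilde\Gamma_D)] \;\le\; |\mathcal A(\D)|\cdot \frac{p(d_0)}{1-p(d_0)},
\]
where $p(d_0)=\PP(\mathcal C(\LL_{\D};1,w,\rho_1(d_0),\rho_2))$. In your scheme the cover must be refined as $d_0\uparrow 2$: to force a diameter-$d_0$ set to cross some annulus of the cover you need inner radii $\rho_1(d_0)\to 0$, which makes $|\mathcal A(\D)|\asymp \rho_1(d_0)^{-1}\to\infty$. So the right-hand side is of the indeterminate form $\infty\cdot 0$, and your dismissal (``for a fixed $d_0$, $|\mathcal A(\D)|$ is a finite constant'') is circular: you are trying to \emph{find} a $d_0$ making the product $<1$, and finiteness alone does not give that. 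To push this through you would need a quantitative decay rate $p(d_0)=o(\rho_1(d_0))$, which is a nontrivial statement about boundary-arm events for the Brownian loop soup (plausible, perhaps extractable from known carpet-dimension or arm estimates, but not supplied here). There is also a smaller slip: the inequality $N_{d_0}(\tilde\Gamma_D)\le M_{d_0}$ is false as stated, since distinct outermost $\LL_D$-clusters can merge into a single $\LL_{\D}$-cluster; you should bound directly by the total disjoint-chain count $\sum_A X_A$ instead of passing through $M_{d_0}$.

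The paper avoids the rate problem entirely with a one-line H\"older trick. Since $N_{d_0}\le N_d$ for $d_0\ge d$,
\[
\EE[N_{d_0}(\tilde\Gamma_D)]
=\EE[N_{d_0}\mathbb{I}_{N_{d_0}>0}]
\le \EE[N_d(\tilde\Gamma_D)^2]^{1/2}\,\PP(N_{d_0}(\tilde\Gamma_D)>0)^{1/2}
\le C_2^{1/2}\,\PP(N_{d_0}(\tilde\Gamma_D)>0)^{1/2},
\]
where $C_2$ is the \emph{fixed} second-moment bound from Lemma~\ref{lemma_dbound}, obtained from the fixed cover. The domain monotonicity of the loop soup then gives $\PP(N_{d_0}(\tilde\Gamma_D)>0)\le \PP(N_{d_0}(\tilde\Gamma_{\D})>0)=\PP(d_{\max}\ge d_0)$, which tends to $0$ as $d_0\uparrow 2$ simply because the largest CLE loop in $\D$ has diameter strictly less than $2$ almost surely. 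No rate is needed: the two factors have been decoupled, one bounded uniformly by Lemma~\ref{lemma_dbound}, the other sent to zero by choosing $d_0$.
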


\begin{proof}
Let $D \subset \mathbb{D}$ be an open and simply connected domain such that $\text{diam}(D) > d$ where $d$ is as in Lemma \ref{lemma_dbound}. By H\"older's inequality, for any $\tilde d \in [d,2)$, we have
\begin{equation*}
    \EE[N_{\tilde d}(\tilde \Gamma_{D})] = \EE[N_{\tilde d}(\tilde \Gamma_{D}) \mathbb{I}_{\{N_{\tilde d}(\tilde \Gamma_{D}) > 0\}}] \leq \EE[N_{\tilde d}(\tilde \Gamma_{D})^2]^{1/2} \PP(N_{\tilde d}(\tilde \Gamma_{D}) > 0)^{1/2}.
\end{equation*}
Moreover, since the random variables $(N_{x}(\tilde \Gamma_{D}))$, $x \in (0,2]$, are non-increasing, we can further write, for any $\tilde d \in [d,2)$
\begin{equation} \label{ineq_disk}
    \EE[N_{\tilde d}(\tilde \Gamma_{D})] \leq \EE[N_{d}(\tilde \Gamma_{D})^2]^{1/2} \PP(N_{\tilde d}(\tilde \Gamma_{D}) > 0)^{1/2}.
\end{equation}
By Lemma \ref{lemma_dbound}, there exists a constant $C_2 < \infty$, independent of $D$, such that $\EE[N_{d}(\tilde \Gamma_{D})^2]\leq C_2$. {If $C_2 < 1$, from \eqref{ineq_disk} and the trivial bound $\PP(N_{\tilde d}(\tilde \Gamma_{D}) > 0) \leq 1$, we obtain that any $\tilde d \in [d,2)$, $\EE[N_{\tilde d}(\tilde \Gamma_{D})]<1$ and the lemma follows with $d_0=\tilde d$.} If $C_2 \geq 1$, \eqref{ineq_disk} yields that for any $\tilde d \in [d,2)$
\begin{equation} \label{ineq_disk2}
    \EE[N_{\tilde d}(\tilde \Gamma_{D})] \leq \sqrt{C_2} \PP(N_{\tilde d}(\tilde \Gamma_{D})) > 0)^{1/2}.
\end{equation}
We next claim that for any $\tilde d \in [d,2)$,
\begin{equation} \label{ineq_loopD}  
\mathbb{P}(N_{\tilde d}(\tilde \Gamma_{D})) > 0) \leq \mathbb{P}(N_{\tilde d}(\tilde \Gamma_{\mathbb{D}})) > 0).
\end{equation}
This is easily seen by using a coupling $(\LL_D, \tilde \Gamma_D, \LL_{\mathbb{D}}, \tilde \Gamma_{\mathbb{D}})$ where $(\LL_D,\LL_{\mathbb{D}})$ are coupled as in the explanation to the inequality (\ref{ineq_cd}), and both $(\LL_D,\tilde \Gamma_D)$ and $(\LL_{\mathbb{D}},\tilde \Gamma_\mathbb{D})$ are coupled as in the proof of Lemma \ref{lemma_dbound}. In this coupling, for $\tilde d \in [d,2)$,
\begin{itemize}
    \item if $N_{\tilde d}(\tilde \Gamma_D) = 1$, then the outermost cluster of loops in $\LL_D$ whose outer boundary corresponds to this loop in $\tilde \Gamma_D$ remains the same or gets larger when constructing $\LL_{\mathbb{D}}$. Moreover, during the construction of $\LL_{\mathbb{D}}$, outermost clusters of loops with diameter larger than $\tilde d$ may appear.
    \item if $N_{\tilde d}(\tilde \Gamma_D) \geq 2$, then the corresponding outermost clusters of loops in $\LL_D$ may merge together during the construction of $\LL_{\mathbb{D}}$. In the worst case, all such clusters merge. But the single cluster thus formed still has diameter larger than $\tilde d$.
\end{itemize}
The above reasoning then implies (\ref{ineq_loopD}). Using this in (\ref{ineq_disk2}), we thus obtain that for any $\tilde d \in [d,2)$
\begin{equation} \label{ineq_disk3}
    \EE[N_{\tilde d}(\tilde \Gamma_{D})] \leq \sqrt{C_2} \PP(N_{\tilde d}(\tilde \Gamma_{\mathbb{D}})) > 0)^{1/2}.
\end{equation}
The lemma will therefore follow if we can show that there exists $d_0 \in [d,2)$ such that 
$\PP(N_{\tilde d_0}(\tilde \Gamma_{\mathbb{D}}) > 0)<1/C_2$. But this simply follows from the fact, writing 
$d_\text{max}$ for the diameter of the largest non-nested CLE$_{\kappa}$ loop in $\mathbb{D}$, that 
\begin{equation*}
    \PP(N_{x}(\tilde \Gamma_{\mathbb{D}}) > 0) = \PP(d_{\max} \ge x) \, \downarrow \, 0 \quad \text{as} \quad x \, \uparrow \, 2.
\end{equation*} 
\end{proof}

With Lemma \ref{lemma_expectation} at hand, we can now turn to the proof of Lemma \ref{lemma_eps}.

\begin{proof}[Proof of Lemma \ref{lemma_eps}]
Let $\epsilon > 0$ and let $\Gamma$ be a nested CLE$_{\kappa}$ in $\mathbb{D}$. We are going to encode the large loops in $\Gamma$ into a tree $T(\epsilon)$ that will be constructed progressively during the proof. We start by constructing a tree $T(1;\eps)$ whose root is simply the unit circle $\partial \mathbb{D}$. Recall that for $n \geq 1$, $\Gamma^{(n)}$ denotes the closed union of the first $n$ levels of loops and let $d_0=d_0(\kappa)$ be as in Lemma \ref{lemma_expectation}, i.e. such that $\EE[N_{d_0}(\Gamma^{(1)})] < 1$.  The vertices in the first generation in $T(1;\epsilon)$ are the loops of $\text{Loop}^{(1)}(\Gamma)$  that have diameter larger than $\epsilon$. By local finiteness of CLE$_{\kappa}$, there are {almost surely} finitely many such loops. We then iteratively construct the next generations of $T(1;\epsilon)$ as follows: if a loop $\ell$ corresponds to a vertex at generation $n$ in $T(1;\epsilon)$, and has diameter larger than $d_0$, then its descendants at generation $n+1$ are the loops of $\text{Loop}^{(n+1)}(\Gamma)$ that lie inside $\ell$ and have diameter larger than $\epsilon$. If a loop at generation $n$ has diameter smaller than $d_0$, then it has no descendants at the next generation. Another way to phrase this is that generation $n$ (the $n$th level vertices) of $T(1,\epsilon)$, is the collection of (nesting)-depth $n$ loops in $\Gamma$ with diameter larger than $\epsilon$, and whose parent loops have diameter larger than $d_0$. Notice that by construction, all the loops corresponding to vertices of $T(1;\eps)$ have diameter larger than $\epsilon$. 

Consider now the restriction $T_0(1;\epsilon)$ of $T(1;\epsilon)$ to loops that have diameter larger than $d_0$: that is, $T_0(1,\eps)$ is just $T(1,\eps)$ minus its leaves. $T_{0}(1;\epsilon)$ is equivalently the tree that one would obtain by keeping at each generation only the loops with diameter larger than $d_0$.
Note that if $\ell$ is a loop at generation $n$ of $T_0(1;\epsilon)$, then its number of descendants in $T_0(1;\epsilon)$ at generation $n+1$ is given by $N_{d_0}(\tilde \Gamma_{\ell})$, where $\tilde \Gamma_{\ell}$ is a non-nested CLE$_{\kappa}$ in $\text{int} (\ell)$. Moreover, if $\ell_1$ and $\ell_2$ are two distinct loops at generation $n$ in $T_{0}(1;\epsilon)$, then $N_{d_0}(\tilde \Gamma_{\ell_1})$ and $N_{d_0}(\tilde \Gamma_{\ell_2})$ are independent. By Lemma \ref{lemma_expectation}, for any $n$ and any loop $\ell$ at generation $n$ of $T_0(1;\epsilon)$, since $\ell \subset \mathbb{D}$ with $\text{diam}(\ell) > d_0$, $\EE[N_{d_0}(\tilde \Gamma_{\ell})] \le c <1$. Therefore, $T_0(1;\eps)$ is dominated by a Galton-Watson tree in which the expected number of descendants of each vertex is strictly less than 1. This implies that there {almost surely} exists $k_1$ such that all loops of diameter larger than $d_0$ in $\Gamma$ have depth at most $k_1-1$ in $T_0(1;\eps)$. In other words, there {almost surely} exists $k_1$ such that no connected component of $\mathbb{D} \setminus \Gamma^{(k_1)}$ has diameter larger than $d_0$. By definition, the construction of $T(1;\eps)$ is finished at generation $k_1$, and we define the first part of $T(\eps)$ to be the tree thus obtained.

We then continue the construction of $T(\epsilon)$ starting from the leaves of $T(1;\epsilon)$. These leaves form a collection {$L_1$} of loops that have diameter larger than $\epsilon$ but strictly smaller than $d_0$. Each of these loops belong to a unique $\text{Loop}^{(n)}(\Gamma)$, for some $n \leq k_1$. Define $r_1:= \max \{ \frac{1}{2}\text{diam}(\ell): \ell \in L_1\}$. Notice that $r_1<d_0/2$ {and each loop in $L_{1}$ is {almost surely} contained in some disk of radius $r_1$.} By scale {and translation} invariance of CLE$_{\kappa}$, plus Lemma \ref{lemma_expectation}, if we define  
\begin{equation} \label{scaleinv_d}
    \frac{d_1}{2r_1} = \frac{d_0}{2} < 1,
\end{equation}
then 
\begin{equation}\label{scaleinv_moment}
    \EE[N_{d_1}(\tilde \Gamma_{D})] \le c <1
\end{equation} 
whenever $D$ is a simply connected domain with diameter less than or equal to $2r_1$ and  $\tilde \Gamma_{D}$ has the law of a non-nested CLE$_{\kappa}$ in $D$.

We are now going to grow trees rooted at {each of the leaves of $T(1;\epsilon)$: in other words, at the loops in $L_{1}$, which we enumerate as $l_1,\dots, l_N$ for some $N<\infty$.} Starting from $\ell_{j} \in L_{1}$, we construct a tree $T^{j}(2;\eps)$ as follows. Let $n_j$ be such that $\ell_j \in \text{Loop}^{(n_j)}(\Gamma)$. If a loop $\ell$ at generation $n$ in $T^j(2;\eps)$ has diameter larger than $d_1$, then its descendants at generation $n+1$ are the loops of $\text{Loop}^{(n_j+n+1)}(\Gamma)$ inside $\ell$ that have diameter larger than $\epsilon$. If on the contrary a loop at generation $n$ in $T^j(2;\eps)$ has diameter smaller than $d_1$, then it has no descendants at generation $n+1$. 

Arguing the same way as for $T(1,
\epsilon)$ and using \eqref{scaleinv_moment} together with the iterative construction of the nested CLE$_\kappa$, 
it follows that for each $j$, the tree $T^j(2;\eps)$ is almost surely finite. 
We then glue the trees $T^j(2;\epsilon)$ to $T(1;\epsilon)$ to produce a new tree $T(2;\epsilon)$. Note that $T(2;\epsilon)$ is a finite tree whose vertices at depth $n$ are $n$th level loops in the original nested CLE$_\kappa$ and that $T(2;\epsilon)$ contains all loops of the nested CLE$_\kappa$ with diameter larger than $d_1$.    
In other words, {since $T(2,\eps)$ is finite, there exists some $k_2<\infty$ such that} no connected component of $\mathbb{D} \setminus \Gamma^{(k_2)}$ has diameter larger than $d_1$.

This {procedure} yields a new set {$L_{2}$} of leaves for {$T(2;\epsilon)$}
such that no loop in {$L_{2}$} has diameter larger than $d_1$ and we can now repeat the previous construction, starting from the loops in {$L_{2}$}, with $r_2$ and $d_2$ defined {analogously to $r_1,d_1$.} Iterating this procedure, we obtain a decreasing sequence $(d_n)_n$ of diameters and a decreasing sequence $(r_n)_n$ of radii such that {almost surely} 
for all $n \in \mathbb{N}^{*}$,
\begin{equation*}
    \frac{d_n}{2r_n} = \frac{d_0}{2} < 1.
\end{equation*} 
In particular, $(d_n)_n$ and $(r_n)_n$ are such that {almost surely}
\begin{equation*}
    d_n = d_0r_n \leq d_0\frac{d_{n-1}}{2} = d_0^2\frac{r_{n-1}}{2} \leq \dots \leq 2 \bigg( \frac{d_0}{2} \bigg)^{n}.
\end{equation*}
Therefore, there {almost surely} exists $N$ such that $d_{N} \leq \epsilon$. Let $k_N<\infty$ denote the total depth of the finite tree $T(N;\epsilon):=T(\epsilon)$, constructed after $N$ iterations of the above process.

Then all loops with diameter larger than $\epsilon$ {in the original nested CLE $\Gamma$} have depth at most {$k_N$} in $T(\epsilon)$. In other words, no connected component of $\mathbb{D} \setminus \Gamma^{({k_N})}$ has diameter larger than $\epsilon$. 
Since $\epsilon > 0$ was arbitrary, this concludes the proof.
\end{proof}

\section{The {almost sure} total disconnectedness of the thick points of the GFF and its consequences} \label{sec_GFF}

\subsection{Proof of Theorem \ref{th_CLEGFF}}

As explained in the discussion around \eqref{eq_versionintro} in the introduction, we work with a version of the circle average process $(h_r(z))_{r,z}$  such that with probability one, for every $\alpha \in (0,1/2), \upzeta \in (0,1)$ and $\epsilon>0$ there exists a (random) constant $M=M(\alpha,\upzeta,\epsilon)<\infty$ such that for all $z,w \in \mathbb{D}$ and $s,r\in (0,1]$ with $1/2<r/s<2$,
\begin{equation}\label{eq_version}
    \vert h_r(z) - h_{s}(w) \vert \leq M \bigg( \log \frac{1}{r} \bigg)^{\upzeta} \frac{\vert (z,r) - (w,s)\vert^{\alpha}}{r^{\alpha+\epsilon}}.
    \end{equation}
Our goal is to show that for this version of the circle average process, and for any fixed $\delta > 0$,
\begin{equation}\label{goal_CLEGFF}
    \PP(\sup_{z \in \mathbb{D}} \, \limsup_{r \to 0} \, \vert \tilde h_r(z) - \tilde S_r(z) \vert > \delta)=0
\end{equation}
where $\tilde S$ and $\tilde h$ are the re-scaled versions of $S$ and $h$ defined in \eqref{def_tilde}. Recall also the discussion before Theorem \ref{th_CLE} describing the coupling between $h$ and $S$ (its coupled nesting field), defined in \eqref{def_nesting4}.

So let us fix $\delta > 0$. For $0<r<1$, we set 
\begin{equation*}
    r_n(r) = \big(1-\frac{c}{n(r)}\big)n(r)^{-K}
\end{equation*}
where $n(r)$ is the unique $n \in \mathbb{N}, n \geq 2$, such that $r \in [n^{-K}, (n-1)^{-K})$ and {where $c \geq 1/\sqrt{2}$ and $K >0$ are fixed but arbitrary.} For $n \in \mathbb{N}^{*}$ and $z \in \mathbb{D}$, we denote by $z_n(z)$ the closest point to $z$ in the set $D_n := n^{-(K+1)}\mathbb{Z}^{2} \cap \mathbb{D}$. We bound {the left hand side of \eqref{goal_CLEGFF}} above by a sum of three terms{:}
\begin{align}
    &\PP(\sup_{z \in \mathbb{D}} \limsup_{r \to 0} \vert \tilde h_{r}(z) - \tilde h_{r_n}(z_{n(r)}(z)) \vert > \frac{\delta}{3}) \label{proba_GFF} \\
    &+ \PP(\sup_{z \in \mathbb{D}} \limsup_{r \to 0} \vert \tilde h_{r_n(r)}(z_{n(r)}(z)) - \tilde S_{r_n(r)}(z_{n(r)}(z)) \vert > \frac{\delta}{3}) \label{proba_mix} \\
    &+ \PP(\sup_{z \in \mathbb{D}} \limsup_{r \to 0} \vert \tilde S_{r_n(r)}(z_{n(r)}(z)) - \tilde S_r(z) \vert > \frac{\delta}{3}) \label{proba_nested}.
\end{align}
We will handle each of these terms separately and show that they are all equal to $0$ for any $K>0$ and $c \geq 1/\sqrt{2}$.

For the term \eqref{proba_GFF}, this follows easily from the continuity estimates \eqref{eq_version} for the circle average process. To show that the term \eqref{proba_mix} is equal to $0$, we will exploit the coupling between $h$ and $S$. We will condition on an appropriate depth of the nested CLE$_4$ coupled to $h$ and use this conditioning to reduce the proof via Borel-Cantelli lemma to variance estimates for the conditional circle average process. Finally, to deal with the term \eqref{proba_nested}, we will upper bound it by the probability that well-chosen annuli in $\mathbb{D}$ contain many CLE$_4$ loops surrounding their inner boundary but not intersecting their outer boundary. Using Borel-Cantelli lemma and estimates on the extremal distance between a non-nested CLE$_4$ loop and the boundary of the domain in which the non-nested CLE$_4$ is sampled, we will deduce that the term \eqref{proba_nested} is equal to 0.

\subsubsection{Proof that term (\ref{proba_GFF}) is zero} \label{subsec_GFF} 
The fact that this probability is equal to $0$ for any choice of $c,K$ follows from \eqref{eq_version}, which means that for all $\alpha \in (0,1/2), \upzeta \in (0,1)$ and $\epsilon>0$, 
\begin{equation}\label{eq_version2}
    \vert h_r(z) - h_{r_n(r)}(z_{n(r)}(z)) \vert \leq M(\alpha,\upzeta,\epsilon) \bigg( \log \frac{1}{r} \bigg)^{\upzeta} \frac{\vert (z,r) - (z_{n(r)}(z), r_n(r))\vert^{\alpha}}{r_n(r)^{\alpha+\epsilon}} \quad \forall z\in \mathbb{D}, r\in (0,1].\end{equation}
By our choice of $r_n(r)$, we have
\begin{equation*}
    \vert r - r_n(r) \vert \leq (n(r)-1)^{-K} - r_n(r) \leq Cn(r)^{-(K+1)}
\end{equation*}
for some {absolute} constant $C>0$. Moreover, for $n \in \mathbb{N}^{*}$, $\vert z - z_n(z) \vert \leq n^{-(K+1)}/\sqrt{2}$. Substituting into \eqref{eq_version2}, for an arbitrary choice of $\alpha\in (0,1/2)$ and $\epsilon<\alpha/K$ implies that for all $z \in \mathbb{D}$, 
\begin{equation*} \label{eq_limCA}
    \vert \tilde h_r(z) - \tilde h_{r_n(r)}(z_{n(r)}(z)) \vert \to 0 \quad \text{as } r\to 0.
\end{equation*}
Thus,
\begin{equation*}
    \PP(\sup_{z \in \mathbb{D}} \limsup_{r \to 0} \vert \tilde h_{r}(z) - \tilde h_{r_n(r)}(z_{n(r)}(z)) \vert > \frac{\delta}{3}) = 0,
\end{equation*}
as required.

\subsubsection{Proof that term (\ref{proba_mix}) is zero} To show that the probability in \eqref{proba_mix} is equal to $0$, again for any $c,K$, we are going to use the Borel-Cantelli lemma. As we will explain shortly, this requires us to establish that the sum
\begin{equation*}
    \sum_{n \in \mathbb{N}^{*}} n^{2(K+1)} \max_{z_n \in D_n} \PP(\vert \tilde h_{r_n}(z_n) - \tilde S_{r_n}(z_n) \vert > \frac{\delta}{3})
\end{equation*}
is finite. In turn, controlling the probability appearing in this sum requires us to understand how the variance of the circle average of $h$ behaves when the field is conditioned on its coupled nested CLE$_4$. So let us first examine this in more detail.

Let us fix $z \in \mathbb{D}$ and $r>0$ such that $r < 2\text{dist}(z,\partial \mathbb{D})$. For $j \geq 1$, denote by $\ell_z^{j}$ the loop of $\text{Loop}^{(j)}(\Gamma)$ containing $z$ and define
\begin{equation*}
    J_{z,r}^{\cap}:= \min \{ j \geq 1: \ell_z^{j} \cap B(z,r) \neq \emptyset \}.
\end{equation*}
In other words, $J_{z,r}^{\cap}$ is the (nesting)-depth of the largest loop in $\Gamma$ that contains $z$ and intersects $B(z,r)$. We denote this loop by $\ell^{J_{z,r}^{\cap}}$.
For general $j$, we further denote by $\text{CR}(z,\ell^j_{z})$ the conformal radius of $\text{int}(\ell^j_{z})$ seen from $z$. 

Conditioning on $\Gamma^{(J_{z,r}^{\cap})}$, we have
\begin{equation}\label{eq_conditionalunconditional}
    \PP(\vert \tilde h_r(z) - \tilde S_r(z) \vert > \frac{\delta}{3}) = \EE\bigg[\PP \bigg( \vert \tilde h_r(z) - \tilde S_r(z) \vert > \frac{\delta}{3} \, \vert \, \Gamma^{(J_{z,r}^{\cap})}\bigg)\bigg].
\end{equation}
Note that the circle $\partial B(z,r)$ intersects infinitely many loops in $\Gamma^{(J_{z,r}^{\cap})}$. One may therefore think that controlling the conditional variance of $\tilde h_r(z)$ will be rather technical and that conditioning on $\Gamma^{(J_{z,r}^{\cap}-1)}$ may be a better path to follow. However, 
conditioned on $\Gamma^{(J_{z,r}^{\cap}-1)}$, we have that $h$ inside $\ell^{J_{z,r}^{\cap}-1}$ is a Gaussian free field \emph{conditioned} to have its first CLE$_4$ level line loop around $z$ intersecting $B(z,r)$. This is a somewhat complicated conditioning to control.  
On the other hand, we will see that when conditioning on $\Gamma^{(J_{z,r}^{\cap})}$, the sum of the contributions to the variance of $\tilde h_r(z)$ coming from the fields inside each loop intersecting $B(z,r)$ remains smaller than a deterministic constant which does not depend on $z$ and $r$. 

Here, we would like to emphasize that when we condition on $\Gamma^{(J_{z,r}^{\cap})}$, we mean that we condition on the $\sigma$-algebra $\sigma((\mathrm{Loop}(\Gamma^{(j)}))_{1\le j\le J^\cap_{z,r}})$ generated by the loops of $\Gamma$ up to depth $J_{z,r}^{\cap}$. It can be shown that this $\sigma$-algebra is the same as $\sigma(\Gamma^{J_{z,r}^{\cap}}) = \sigma(\overline{\cup_{1 \leq j \leq J_{z,r}^{\cap}}\mathrm{Loop}(\Gamma^{(j)})})$.

Let us denote by $(O_j)_{j}$ the collection of open and simply connected components of $\mathbb{D} \setminus \Gamma^{(J_{z,r}^{\cap})}$. {By definition of the coupling between $h$ and $\Gamma$,} conditionally on $\Gamma^{(J_{z,r}^{\cap})}$,
\begin{equation*}
    h_r(z) = \sum_{j: O_j \cap \partial B(z,r) \neq \emptyset} h_r^{(j)}(z) + \int H(x) \rho_{r}^{z}(dx)
\end{equation*}
where:
\begin{itemize}
    \item $((h^{(j)})_j, H)$ are independent; 
    \item the $h^{(j)}$ are independent GFFs with Dirichlet boundary conditions in each $O_j$; and 
    \item $H$ is {almost surely}  constant when restricted to each $O_j$, satisfying
    \begin{equation*}
    H = S_r(z) + \xi_{j}
    \end{equation*}
    for each $j$, where the $\xi_j$ are independent of $S_r(z)$ and of each other, {with, independently for each $j$, $\PP(\xi_j=-2\lambda)=\PP(\xi_j=2\lambda)=1/2$.}
\end{itemize} 

Moreover, {$\Gamma^{(J_{z,r}^{\cap})}$ 
has the property that the integral of $H$ inside $\Gamma^{(J_{z,r}^{\cap})}$ with respect to $\rho_{r}^{z}$ is almost surely equal to $0$  (it is what is known as a ``thin local set'' of $h$, {\cite{thinLS}, see also \cite[Section~4.2.5]{book_GFF}}).  Therefore, we have that}
\begin{equation*}
    \big \vert \int H(x) \rho_{r}^{z}(dx) - S_r(z)  \big \vert \leq 2\lambda
\end{equation*}
almost surely, and in turn, {almost surely,}
\begin{align} \label{cond_proba}
    \PP \bigg( \vert \tilde h_r(z) - \tilde S_r(z) \vert > \frac{\delta}{3} \,\, \big \vert \,\, \Gamma^{(J_{z,r}^{\cap})}\bigg) &= \PP \bigg( \big \vert \sum_{j: O_j \cap \partial B(z,r) \neq \emptyset} h^{(j)}_r(z) - \int \xi_{\ell_x} \rho_{r}^{z}(dx) \, \big \vert > \frac{\delta}{3} \log \frac{1}{r} \,\, \big \vert \,\, \Gamma^{(J_{z,r}^{\cap})} \bigg) \nonumber \\
    &\leq \PP \bigg( \big \vert \sum_{j: O_j \cap \partial B(z,r) \neq \emptyset} h^{(j)}_r(z) \big \vert > -2\lambda + \frac{\delta}{3} \log \frac{1}{r} \,\, \big \vert \,\, \Gamma^{(J_{z,r}^{\cap})} \bigg).
\end{align}
Now, the $h_r^{(j)}(z)$ are (conditionally) independent Gaussian random variables. Therefore, bounding the conditional probability in (\ref{cond_proba}) 
amounts to controlling the conditional variance of their sum (and using the elementary bound $\PP(\vert X \vert \geq m) \leq C\exp(-\frac{m^2}{\sigma^2})$ for $X \sim \mathcal{N}(0,\sigma^2)$ and $m>0$).
In other words, we need to understand the random variable 
\begin{equation*}
    \EE \bigg [ \bigg( \sum_{j: O_j \cap \partial B(z,r) \neq \emptyset} h_r^{(j)}(z) \bigg)^2 \, \big \vert \, \Gamma^{(J_{z,r}^{\cap})}\bigg].
\end{equation*}
We will prove the following lemma.

\begin{lemma} \label{lemma_varsum}
We have
\begin{equation*}
    \EE \bigg [ \bigg( \sum_{j: O_j \cap \partial B(z,r) \neq \emptyset} h_r^{(j)}(z) \bigg)^2 \, \big \vert \, \Gamma^{(J_{z,r}^{\cap})}\bigg] \leq \log 4
\end{equation*}
almost surely.
\end{lemma}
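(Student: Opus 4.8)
The plan is to compute the conditional variance explicitly and then bound it by hand. By the description of the GFF--CLE$_4$ coupling recalled above, conditionally on $\Gamma^{(J_{z,r}^{\cap})}$ the fields $h^{(j)}$ are independent GFFs with Dirichlet boundary conditions in the (simply connected) components $O_j$ of $\mathbb{D}\setminus\Gamma^{(J_{z,r}^{\cap})}$, and are independent of $H$; hence, conditionally, $\sum_{j}h_r^{(j)}(z)$ is a sum of independent centred Gaussians and
\begin{equation*}
\EE\bigg[\bigg(\sum_{j:\,O_j\cap\partial B(z,r)\ne\emptyset}h_r^{(j)}(z)\bigg)^{2}\,\bigg|\,\Gamma^{(J_{z,r}^{\cap})}\bigg]
=\sum_{j:\,O_j\cap\partial B(z,r)\ne\emptyset}\ \iint_{O_j\times O_j}G_{O_j}(x,y)\,\rho_r^z(dx)\,\rho_r^z(dy).
\end{equation*}
Writing $A_j:=\partial B(z,r)\cap O_j$ and $m_j:=\rho_r^z(O_j)=\rho_r^z(A_j)$, we have $\sum_j m_j\le 1$ because the $O_j$ are disjoint. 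The naive bound $G_{O_j}\le G_{\mathbb{D}}$ only gives $\tfrac{1}{2\pi}\log\tfrac1r$, which is far too weak; the point is that near the circle $\partial B(z,r)$ the domains $O_j$ are comparable to scale $r$, and this is what saves a factor of $\log\tfrac1r$.

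The first key step is the geometric claim that for every $j$ with $A_j\ne\emptyset$ and every $x\in A_j$ one has $\mathrm{dist}(x,\partial O_j)<2r$. Indeed, distinct depth-$J_{z,r}^{\cap}$ loops are mutually exterior, so if $O_j$ does not contain $z$ then $z\notin\overline{O_j}$ and the straight segment from $x\in O_j$ to $z$ must cross $\partial O_j$ at distance at most $|x-z|=r$ from $x$; and if $O_j=\mathrm{int}(\ell_z^{J_{z,r}^{\cap}})$ is the single component containing $z$, then by definition of $J_{z,r}^{\cap}$ the loop $\ell_z^{J_{z,r}^{\cap}}=\partial O_j$ meets $B(z,r)$, hence has a point within distance $r$ of $z$, hence within $2r$ of $x$.

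The second step is a standard distortion estimate for the Green function of a simply connected domain $O$: choosing a conformal map $\varphi\colon O\to\mathbb{D}$ with $\varphi(x)=0$ one has $G_O(x,y)=\tfrac1{2\pi}\log\tfrac1{|\varphi(y)|}$, and the Koebe $1/4$ and distortion theorems applied to $\varphi^{-1}$ give $|\varphi(y)|\ge |x-y|/(4\,\mathrm{CR}(x,O))$ whenever $|\varphi(y)|\le 1/2$, so that in all cases
\begin{equation*}
G_O(x,y)\ \le\ \frac1{2\pi}\Big(\log 2\ \vee\ \log\frac{16\,\mathrm{dist}(x,\partial O)}{|x-y|}\Big).
\end{equation*}
Combined with $\mathrm{dist}(x,\partial O_j)<2r$ this turns each summand into an energy integral along the arc $A_j$: parametrising $A_j$ by angle and using that $|x-y|$ is comparable to $r$ times the angular distance, together with the elementary identity $\iint_{[0,a]^2}\log\tfrac1{|s-t|}\,ds\,dt=a^2(\tfrac32-\log a)$, one gets
\begin{equation*}
\iint_{O_j\times O_j}G_{O_j}(x,y)\,\rho_r^z(dx)\,\rho_r^z(dy)\ \le\ \frac{1}{2\pi}\Big(m_j^{2}\log\frac1{m_j}+c_1\,m_j^{2}\Big)
\end{equation*}
for an absolute constant $c_1$ (the single possible index $j$ with $m_j>1/2$ being handled separately, contributing a fixed constant). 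Summing over $j$ and using $\sum_j m_j\le 1$, $\sum_j m_j^2\le\big(\sum_j m_j\big)^2\le 1$, and $\sum_j m_j^{2}\log\tfrac1{m_j}\le\big(\sup_{0<t<1}t\log\tfrac1t\big)\sum_j m_j\le 1/e$, this yields a universal upper bound on the conditional variance.

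The main obstacle is the last point: one has to keep track of the absolute constants in the Koebe estimate and in the arc-energy computation sharply enough that the universal bound which finally comes out is actually $\le\log 4$, rather than merely finite. Everything else --- the conditional variance identity, the geometric lemma $\mathrm{dist}(x,\partial O_j)<2r$, and the summation over $j$ --- is routine once these two estimates are available.
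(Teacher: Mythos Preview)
Your approach is correct in spirit and would produce a universal bound on the conditional variance, but it is genuinely different from the paper's and, as you yourself flag, does not deliver the specific constant $\log 4$ without further work. The paper's argument avoids the arc-by-arc bookkeeping entirely via the following trick: since the $O_j$ are pairwise disjoint, $\sum_j G_{O_j}=G_{\tilde O}$ with $\tilde O:=\bigcup_j O_j$; by domain monotonicity of the Green function, $G_{\tilde O}\le G_{\tilde O\cup B(z,r)}$; and because $B(z,r)\subset \tilde O\cup B(z,r)$, the circle-average variance in the enlarged domain is computed \emph{exactly} as $\log\bigl(\CR(z,\tilde O\cup B(z,r))/r\bigr)$. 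The loop $\ell_z^{J_{z,r}^\cap}$ meets $B(z,r)$ and is connected through $\Gamma^{(J_{z,r}^\cap)}$ to $\partial\mathbb{D}$, so the boundary of $\tilde O\cup B(z,r)$ comes within distance $r$ of $z$; Koebe $1/4$ then gives $\CR\le 4r$, and the bound $\log 4$ drops out with no constant-chasing.

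Compared to this, your route trades one global monotonicity step for a Koebe distortion estimate on each $O_j$ plus an arc-energy computation and a summation argument. This is sound, and your geometric claim $\dist(x,\partial O_j)<2r$ is correct for both the component containing $z$ and the others. Two points deserve comment. First, $A_j=\partial B(z,r)\cap O_j$ need not be a single arc; your interval identity is stated for one interval, so you should either note that a Riesz-type rearrangement on the circle shows the single-arc case dominates, or simply bound the energy by that of the full circle. Second, with your constants ($16$ from Koebe, $2r$ from the geometric lemma, and the $3/2$ from the interval energy), the resulting bound exceeds $\log 4$ in the paper's normalisation, so the lemma \emph{as stated} is not proved by your method without sharpening the distortion input. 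For the downstream application (the Gaussian tail bound feeding into Borel--Cantelli) any absolute constant suffices, so your approach is adequate for Theorem~\ref{th_CLEGFF}; but if the goal is the lemma verbatim, the paper's monotonicity trick is both shorter and sharp.
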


Before proving this lemma, let us see how it implies that Term \eqref{proba_mix} is $0$ (for any $c,K$). First, combining it with our elementary Gaussian upper bound, we get that
\begin{equation} \label{ineq_probaAS}
    \PP \bigg( \big \vert \sum_{j: O_j \cap \partial B(z,r) \neq \emptyset} h^{(j)}_r(z) \big \vert > -2\lambda + \frac{\delta}{3} \log \frac{1}{r} \, \vert \, \Gamma^{(J_{z,r}^{\cap})} \bigg) \leq C\exp\bigg(- \frac{(-2\lambda + \frac{\delta}{3} \log \frac{1}{r})^2}{{\log 4}} \bigg) 
\end{equation}
almost surely. In particular, the right-hand side is non-random. Then substituting \eqref{ineq_probaAS} into \eqref{cond_proba}, \eqref{eq_conditionalunconditional} {yields}
\begin{equation}\label{eq_ubwithZ}
    \PP(\vert \tilde h_r(z) - \tilde S_r(z) \vert > \frac{\delta}{3})\le C \,  \exp\bigg(- \frac{(-2\lambda + \frac{\delta}{3} \log \frac{1}{r})^2}{{\log 4}} \bigg).
\end{equation}

If we set $r=r_n$ and $z=z_n$ in the above inequality, we see that the right-hand side decays faster than any power of $n$ as $n \to \infty$, at a rate that can be chosen to be independent of $z_n$. This shows that for any $c,K$, the sum
\begin{equation*}
    \sum_{n \in \mathbb{N}^{*}} n^{2(K+1)} \max_{z_n \in D_n} \PP(\vert \tilde h_{r_n}(z_n) - \tilde S_{r_n}(z_n) \vert > \frac{\delta}{3})
\end{equation*}
is finite, so by the Borel-Cantelli lemma, we conclude that
\begin{equation*}
    \PP(\sup_{z \in \mathbb{D}} \limsup_{r \to 0} \vert \tilde h_{r_n(r)}(z_{n(r)}(z)) - \tilde S_{r_n(r)}(z_{n(r)}(z)) \vert > \frac{\delta}{3}) = 0.
\end{equation*}

It thus remains to prove Lemma \ref{lemma_varsum}.

\begin{proof}[Proof of Lemma \ref{lemma_varsum}]
By independence of the fields $h^{(j)}$ conditionally on $\Gamma^{(J_{z,r}^{\cap})}$, we almost surely have
\begin{equation*}
    \EE \bigg [ \bigg( \sum_{j: O_j \cap \partial B(z,r) \neq \emptyset} h_r^{(j)}(z) \bigg)^2 \, \big \vert \, \Gamma^{(J_{z,r}^{\cap})}\bigg] = \sum_{j: O_j \cap B(z,r) \neq \emptyset} \int_{O_j} G_{O_j}(x,y) \rho_{r}^{z}(dx)\rho_{r}^{z}(dy)
\end{equation*}
where for each $j$, $G_{O_j}$ denotes the Green function in $O_j$. Since the $O_j$ are disjoint, setting
\begin{equation*}
    \tilde O = \bigcup_{j: O_j \cap B(z,r) \neq \emptyset} O_j
\end{equation*}
we have, for all $x,y \in \tilde O$,
\begin{equation*}
    G_{\tilde O}(x,y) = \sum_{j: O_j \cap B(z,r) \neq \emptyset} G_{O_j}(x,y)
\end{equation*}
where $G_{\tilde O}$ denotes the Green function in $\tilde O$ and $G_{O_j}(x,y)=0$ if $x$ or $y$ is not in $O_j$. Since the functions $G_{O_j}$ are non-negative, we can apply Tonelli's theorem to obtain that almost surely
\begin{equation*}
    \sum_{j: O_j \cap B(z,r) \neq \emptyset} \int_{O_j} G_{O_j}(x,y) \rho_{r}^{z}(dx)\rho_{r}^{z}(dy) = \int_{\tilde O} G_{\tilde O}(x,y) \rho_{r}^{z}(dx)\rho_{r}^{z}(dy).
\end{equation*}
By monotonicity of the Green function, see for example \cite{book_GFF}, we then almost surely have
\begin{equation} \label{niceCA}
    \int_{\tilde O} G_{\tilde O}(x,y) \rho_{r}^{z}(dx)\rho_{r}^{z}(dy) \leq \int_{\tilde O \cup B(z,r)} G_{\tilde O \cup B(z,r)}(x,y) \rho_{r}^{z}(dx)\rho_{r}^{z}(dy)
\end{equation}
where $G_{\tilde O \cup B(z,r)}$ denotes the Green function inside $\tilde O \cup B(z,r)$. Since $B(z,r) \subset \tilde O \cup B(z,r)$, we can explicitly compute the integral on the right hand-side of \eqref{niceCA}:
\begin{equation} \label{niceCA_2}
    \int_{\tilde O \cup B(z,r)} G_{\tilde O \cup B(z,r)}(x,y) \rho_{r}^{z}(dx)\rho_{r}^{z}(dy) = -\log r + \log \text{CR}(z, \partial(\tilde O \cup B(z,r))) \quad \text{almost surely.}
\end{equation}
Moreover, by the Koebe $1/4$-theorem, the definition of $J_{z,r}^{\cap}$ and the fact that $\Gamma^{(J_{z,r}^{\cap})}$ is almost surely path-connected, we almost surely have
\begin{equation*}
    \text{CR}(z, \partial( O(z) \cup B(z,r))) \leq 4\text{dist}(z, \partial (O(z) \cup B(z,r)) \leq 4r.
\end{equation*}
Combining \eqref{niceCA} and \eqref{niceCA_2} with this upper bound, we obtain,
\begin{equation*}
    \EE \bigg [ \bigg( \sum_{j: O_j \cap \partial B(z,r) \neq \emptyset} h_r^{(j)}(z) \bigg)^2 \, \big \vert \, \Gamma^{(J_{z,r}^{\cap})}\bigg] \leq \log \frac{4r}{r} = \log 4 
\end{equation*}
almost surely, which completes the proof of the lemma.
\end{proof}

\subsubsection{Proof that term (\ref{proba_nested}) is zero} To establish that this probability is equal to 0 (for any fixed $K>0$ and $c\ge 1/\sqrt{2}$), we are going to show that for any fixed $\tilde \delta > 0$,
\begin{equation*}
    \PP(\exists z \in \mathbb{D} 
    \text{ such that } \limsup_{r\to 0} \vert \tilde S_{r}(z) - \tilde S_{r_n(r)}(z_{n(r)}(z)) \vert > \tilde \delta ) = 0.
\end{equation*}
Recall that $r_n(r)=(1-c/n(r))n(r)^{-K}$ where $n(r)$ is the unique $n \in \mathbb{N}^{*}$ such that $r \in [n^{-K}, (n-1)^{-K})$). So let us fix $\tilde \delta > 0$ and further define
\begin{equation*}
    R_n(r) := \big(1+\frac{d}{n(r)}\big)n(r)^{-K}
\end{equation*}
where $d=d(c,K)>0$ is chosen large enough such that for all $z \in B(0,1/2)$ and $r>0$ 
\begin{equation*}
    \partial B(z,r) \subset B(z_{n(r)}(z),R_{n}(r)) \setminus B(z_{n(r)}(z),r_n(r)).
\end{equation*}
Note that since $\vert z - z_{n(r)}(z) \vert \leq n(r)^{-(K+1)}/\sqrt{2}$ and  $r \geq n(r)^{-(K+1)}/\sqrt{2} + r_n(r)$ (because $c\ge 1/\sqrt{2}$)
we do have that $\partial B(z,r)$ lies outside of $B(z_{n(r)}(z),r_n(r))$. Moreover, $\partial B(z,r)$ lies inside $B(z_{n(r)}(z), R_n(r))$ as long as 
$r \leq \vert z - z_{n(r)}(z) \vert + R_n(r)$, 
which is satisfied if
\begin{equation*}
    (n(r)-1)^{-K} \leq \frac{n(r)^{-(K+1)}}{\sqrt{2}} + (1+\frac{d}{n(r)})n(r)^{-K},
\end{equation*}
i.e. as long as $d$ is large enough. 

With our choice of $d$, if the event $\{ \vert \tilde S_{r}(z) - \tilde S_{r_n(r)}(z_{n(r)}(z)) \vert > \tilde \delta\}$ occurs for some $z \in B(0,1/2)$ and $r>0$, then there exist at least $C\log(n(r)-1)$ loops that surround $B(z_{n(r)}(z),r_n(r))$ but not $B(z_{n(r)}(z), R_n(r))$, for some constant $C=C(\tilde\delta, K)>0$. Indeed, we have:
\begin{align*}
    \vert \tilde S_{r}(z) - \tilde S_{r_n(r)}(z_{n(r)}(z)) \vert = &\big \vert \frac{S_r(z)}{\log 1/r} - \frac{S_{r_n(r)}(z_{n(r)}(z))}{\log 1/r_n(r)} \big \vert \\
    \leq & \frac{1}{K\log(n(r)-1)} \vert S_r(z) - S_{r_n(r)}(z_{n(r)}(z)) \vert\\
    \leq &\frac{2\lambda}{K\log(n(r)-1)} \\
    & \times \#\{ \ell \in \Gamma \text{surrounding $B(z_{n(r)}(z),r_n(r))$ but not $B(z_{n(r)}(z),R_n(r))$} \}.
\end{align*}
where the last inequality follows because the number of loops surrounding $B(z_{n(r)}(z),r_n(r))$ but not $B(z,r)$, i.e. the loops that contribute to $\vert S_r(z) - S_{r_n(r)}(z_{n(r)}(z)) \vert$, is less than the number of loops surrounding $B(z_{n(r)}(z),r_n(r))$ but not $B(z_{n(r)}(z),R_n(r))$, 
and because the signed Bernoulli random variable associated to each loop almost surely has modulus $\le 2\lambda$. See Figure \ref{fig_Bn} for a visual representation.
\begin{figure}
\begin{center}
      \includegraphics[width=0.5\textwidth,trim={4cm 0 0 0},clip]{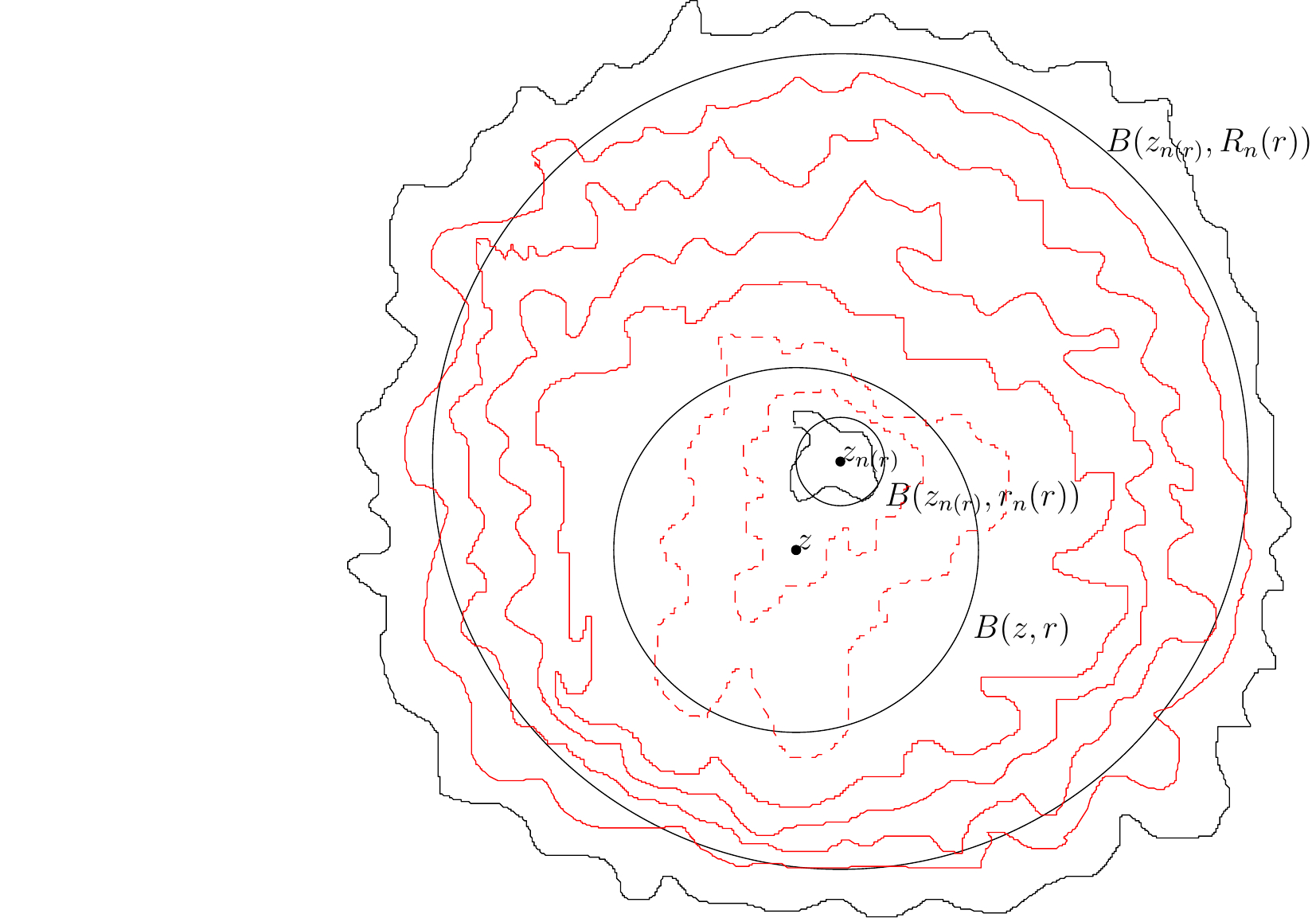}
  \caption{The balls $B(z,r)$, $B(z_{n(r)},r_n(r))$ and $B(z_{n(r)},R_n(r))$. The red loops that are dashed contribute to $\vert S_r(z) - S_{r_n(r)}(z_{n(r)}(z)) \vert$. The red loops, either plain or dashed, are those $\mathrm{CLE}_4$ loops that surround $B(z_{n(r)}, r_n(r))$ but not $B(z_{n(r)}, R_n(r))$.}
  \label{fig_Bn}
  \end{center}
\end{figure}

For $n \geq 2$ and $z_n \in D_n$, let us define the event 
\begin{equation*}
    A_{n, z_n} = \{ \text{there are at least $C\log(n-1)$ loops that surround $B(z_n,r_n)$ but not $B(z_n,R_n)$} \}
\end{equation*}
where $r_n=(1-c/n)n^{-K}$, $R_n=(1+d/n)n^{-K}$ and $C$ is the constant derived above. From the previous discussion, we obtain the following inequality:
\begin{equation*}
    \PP(\exists z \in \mathbb{D} 
    \text{ such that } {\limsup_{r\to 0}} \vert \tilde S_{r}(z) - \tilde S_{r_n(r)}(z_{n(r)}(z)) \vert > \tilde \delta ) \leq \PP\bigg( \big \{ \underset{z_n \in D_n}{\bigcup} A_{n,z_n}\big \} \text{ i.o.}\bigg).
\end{equation*}
So we need to show that the right-hand side in this inequality is equal to 0. This will follow from Borel-Cantelli lemma if we can establish that the sum
\begin{equation*}
    \sum_{n \geq 2} n^{2(K+1)} \max_{z_n \in D_n} \PP(A_{n, z_n})
\end{equation*}
is finite. This holds in particular if $\max_{z_n \in D_n} \PP(A_{n, z_n})$ decays faster than any power of $n$. In other words, it suffices to prove the following lemma.

\begin{lemma}\label{lem_A_bound}
There exists a sequence $g(n)\to 0$ as $n\to \infty$ such that \begin{equation*}
    \PP(A_{n,z_n}) \leq  g(n)^{\lfloor C\log(n-1) \rfloor -1}
\end{equation*}
for all $n\ge 2$ and $z_n\in D_n$.
\end{lemma}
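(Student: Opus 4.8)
The plan is to reveal the loops of $\Gamma$ surrounding $z_n$ one nesting level at a time and to combine the Markov property of the nested $\CLE_4$ with a conformal distortion estimate. Write $\ell^{j}$ for the depth-$j$ loop of $\Gamma$ surrounding $z_n$ and $O^{j}:=\mathrm{int}(\ell^{j})$, so that $O^{1}\supset O^{2}\supset\cdots$ is a decreasing sequence of Jordan domains with $\CR(z_n,O^{j})\to0$ almost surely. A loop of $\Gamma$ surrounds a ball $B(z_n,s)$ precisely when it equals some $\ell^{j}$ with $B(z_n,s)\subset O^{j}$; since the $O^{j}$ are decreasing, the indices with $B(z_n,r_n)\subset O^{j}$ form an initial segment $\{1,\dots,N_{r_n}\}$ containing the initial segment $\{1,\dots,N_{R_n}\}$ of indices with $B(z_n,R_n)\subset O^{j}$ (both finite almost surely). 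Thus the loops surrounding $B(z_n,r_n)$ but not $B(z_n,R_n)$ are exactly the consecutive block $\ell^{N_{R_n}+1},\dots,\ell^{N_{r_n}}$, of length $m:=N_{r_n}-N_{R_n}$, and on $A_{n,z_n}$ one has $m\ge\lfloor C\log(n-1)\rfloor$ (and $\PP(A_{n,z_n})=0$ trivially if $B(z_n,r_n)\not\subset\mathbb{D}$, so we may assume otherwise). Moreover, if $\ell^{j}$ lies in this block then $B(z_n,r_n)\subset O^{j}$ but $B(z_n,R_n)\not\subset O^{j}$, hence $\dist(\overline{B(z_n,r_n)},\partial O^{j})<R_n-r_n$ and, by Koebe's theorem, $\CR(z_n,O^{j})\le4R_n$: the closed ball $\overline{B(z_n,r_n)}$ is \emph{nearly internally tangent} to $\partial O^{j}$, with tangency defect $\eta_n:=(R_n-r_n)/r_n=O(1/n)\to0$.

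The key deterministic input is a conformal distortion estimate that I would prove by rescaling and a normal-families argument: there is a function $\delta(\cdot)$ with $\delta(\eta)\to0$ as $\eta\to0$ such that whenever $O$ is a simply connected domain, $z\in O$, $B(z,r)\subset O$, $\dist(\overline{B(z,r)},\partial O)\le\eta r$, and $\varphi\colon O\to\mathbb{D}$ is conformal with $\varphi(z)=0$, then $\varphi(B(z,r))$ contains a point at distance at most $\delta(\eta)$ from $\partial\mathbb{D}$. After rescaling so that $z=0$ and $r=1$, the bounds $1\le\dist(0,\partial O)\le1+\eta$ give $\CR(0,O)\in[1,5]$, which makes the family of admissible domains precompact for Carathéodory convergence based at $0$; if the estimate failed along domains with $\eta_k\to0$, one could pass to a Carathéodory limit $O_\infty\supseteq B(0,1)$ in which $\overline{B(0,1)}$ is internally tangent to $\partial O_\infty$ and the images $\varphi_k(w_k)$ of near-boundary points $w_k\in B(0,1)$ stay at a definite distance from $\partial\mathbb{D}$; but the uniformising maps and their inverses converge locally uniformly, which forces $\varphi_k(w_k)$ to accumulate on $\partial\mathbb{D}$ (otherwise the tangency point $w_\infty$ would have an interior preimage under the limit map), a contradiction. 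Applying this with $\eta=\eta_n$ gives $\delta_n:=\delta(\eta_n)\to0$. (This is exactly where extremal-distance estimates for $\CLE_4$ loops near the domain boundary, in the spirit of Schramm--Sheffield--Wilson, would enter; only the qualitative version is needed here.)

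To assemble the pieces, let $\mathcal F_{j}$ be the $\sigma$-algebra generated by the loops of $\Gamma$ of depth at most $j$. Conditionally on $\mathcal F_{j}$, the loops inside $O^{j}$ form a fresh nested $\CLE_4$, so, writing $\varphi_j\colon O^{j}\to\mathbb{D}$ for the conformal map with $\varphi_j(z_n)=0$, the curve $\varphi_j(\ell^{j+1})$ has the law of the loop around $0$ of a non-nested $\CLE_4$ in $\mathbb{D}$ and is (conditionally) independent of $\mathcal F_{j}$. On the $\mathcal F_j$-measurable event that $\ell^{j}$ lies in the block, if $\ell^{j+1}$ also lies in the block then it surrounds $B(z_n,r_n)$, so $\varphi_j(\ell^{j+1})$ surrounds $\varphi_j(B(z_n,r_n))$, which by the first two paragraphs contains a point within $\delta_n$ of $\partial\mathbb{D}$; following a short radial segment towards that boundary point shows $\varphi_j(\ell^{j+1})$ meets $\{z:\dist(z,\partial\mathbb{D})\le2\delta_n\}$. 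Hence, on that event,
\[
\PP\bigl(\ell^{j+1}\text{ lies in the block}\mid\mathcal F_{j}\bigr)\ \le\ g(n):=\PP\bigl(\text{the non-nested }\CLE_4\text{ loop around }0\text{ in }\mathbb{D}\text{ meets }\{z:\dist(z,\partial\mathbb{D})\le2\delta_n\}\bigr),
\]
and by conformal invariance of $\CLE_4$ this bound is uniform in $j$, in $O^{j}$ and in $z_n$. Since almost surely the $\CLE_4$ loop around $0$ stays at positive distance from $\partial\mathbb{D}$ (as $\kappa\le4$), these events decrease to a null event and $g(n)\to0$. Peeling the block from the outside in and applying this bound at successive levels, the tower property gives $\PP(m\ge M)\le g(n)^{M-1}$ for every integer $M\ge1$; taking $M=\lfloor C\log(n-1)\rfloor$ and using $A_{n,z_n}\subseteq\{m\ge M\}$ yields $\PP(A_{n,z_n})\le g(n)^{\lfloor C\log(n-1)\rfloor-1}$, uniformly in $z_n\in D_n$, as desired.

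I expect the main obstacle to be the conformal distortion estimate of the second paragraph: a direct application of Koebe's distortion theorem along the radial segment towards the near-boundary point fails, because $|\varphi'|$ blows up near $\partial B(z,r)$ and overwhelms the gain from the small distance $R_n-r_n$; one genuinely needs the compactness/continuity argument above (or an equivalent extremal-length inequality) to propagate near-tangency through the uniformising maps $\varphi_j$ uniformly over the random domains $O^{j}$.
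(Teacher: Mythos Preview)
Your overall strategy coincides with the paper's: reveal the loops around $z_n$ level by level, use the (strong) Markov property of the nested $\CLE_4$ together with its conformal invariance, and show that each conditional probability ``the next loop still surrounds $B(z_n,r_n)$'' is at most some $g(n)\to 0$, uniformly in the random domain $O^{j}$. The product/tower step and the stopping-time $J=N_{R_n}+1$ are handled the same way in both arguments.

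The genuine difference is in how you obtain the uniform bound $g(n)$. The paper works directly with a conformal invariant: it observes that on the relevant event the \emph{extremal distance} between $\partial B(z_n,r_n)$ and $\ell^{J+k-1}$ is at most a deterministic $e(n)$ (by a Gr\"otzsch/Teichm\"uller--type comparison, citing Ahlfors), hence so is the extremal distance between the new loop and the boundary of its domain; by conformal invariance this has the same law as for the $\CLE_4$ loop around $0$ in $\mathbb{D}$, and the probability that this extremal distance is at most $e(n)$ tends to $0$ by a cited result on $\CLE$ loop moduli. You instead prove a \emph{distortion estimate} via Carath\'eodory compactness: the conformal image $\varphi_j(B(z_n,r_n))$ must contain a point within $\delta_n\to 0$ of $\partial\mathbb{D}$, so the (conformally transported) $\CLE_4$ loop around $0$ must come within $\delta_n$ of $\partial\mathbb{D}$, which has probability $g(n)\to 0$ simply because $\CLE_4$ loops almost surely do not touch the boundary. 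Your compactness sketch is correct in spirit and can be made rigorous exactly along the lines you indicate (uniform bounds $\CR(0,O)\in[1,4(1+\eta)]$ give a normal family for the inverse maps $\psi_k$, and the contradiction comes from $\overline{B(0,1)}\subset\psi_\infty(\overline{B(0,1-\epsilon)})\subset O_\infty$ versus $\dist(0,\partial O_\infty)=1$). The trade-off is that your route is more self-contained (it only needs the qualitative fact that simple $\CLE$ loops avoid the boundary, not a quantitative extremal-distance tail), whereas the paper's route is shorter and uses extremal length as the natural conformal invariant, at the price of two external references. Note, incidentally, that your distortion estimate is itself an immediate consequence of the extremal-length bound the paper uses: small modulus of $O\setminus\overline{B(0,1)}$ transports to small modulus of $\mathbb{D}\setminus\varphi(\overline{B(0,1)})$, which forces $\varphi(B(0,1))$ to approach $\partial\mathbb{D}$; so the two arguments are closely related, and your compactness proof is an alternative, reference-free derivation of the same key estimate.
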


\begin{proof}
Let us fix $n\ge 2$ and $z_n\in D_n$.
To lighten the notations, let us set 
{$J= J_{z_n,R_n}^{\cap}$ so that $\ell^J$ is the first nested CLE$_4$ loop intersecting $B(z_n,R_n)$.} Using the the nestedness of the CLE$_4$ loops, we can bound 
\begin{align} \label{ineq_An}
    \PP(A_{n,z_n}) &\leq \PP \big(B(z_n,r_n) \subset \ell^{J + \lfloor C \log(n-1) \rfloor -1} \big) \nonumber \\
    &
\le   \PP \big( B(z_n,r_n) \subset \ell^{J + \lfloor C \log(n-1) \rfloor -1} \, \vert \, B(z_n,r_n) \subset \ell^J \big) \nonumber \\
    &\le  \prod_{k=1}^{\lfloor C \log(n-1) \rfloor -1} \PP \big( B(z_n,r_n) \subset \ell^{J+k} \, \vert \,  B(z_n,r_n) \subset \ell^{J+k-1}  \big),
\end{align}
where we also used the trivial bound $\mathbb{P}(B(z_n,r_n)\subset \ell^{J})\le 1$ in the second line.

We will show that there exists $g(n)$ not depending on our choice of $z_n\in D_n$, and with $g(n)\to 0$ as $n\to \infty$, such that for every $k \in \{1, \dots, \lfloor C \log(n-1) \rfloor \}$,
\begin{equation} \label{k_pn2}
    \PP(B(z_n,r_n) \subset \ell^{J+k}  \vert B(z_n,r_n)\subset \ell^{J+k-1}) \leq g(n).
\end{equation}
This clearly implies the lemma, by \eqref{ineq_An}. 

To see \eqref{k_pn2}, observe that conditionally on $\ell^{J+k-1}$ (for any $k\ge 1$), $\ell^{J+k}$ has the law of the (unique) CLE$_4$ loop surrounding $z_n$ in a \emph{non-nested} CLE$_4$ in $\ell^{J+k-1}$. Moreover, $\ell^{J+k-1}\subset \mathbb{D}$ contains a point within distance $R_n$ of $z_n$ by definition. Thus, by \cite[Theorem 4-6]{Ahlfors}, the extremal distance between $\ell^{J+k-1}$ and $\partial B(z_n,r_n)$ is deterministically bounded above by $e(n)$: the extremal distance between the unit circle and the line segment $[R_n/r_n,+\infty]$. Notice that $R_n/r_n\to 1$ by construction as $n\to \infty$, and therefore by continuity of extremal distance (see for example \cite{contED}), we have that $e(n)\to 0$ as $n\to \infty$. {By \cite[Theorem 4-1]{Ahlfors}, it follows that on the event $B(z_n,r_n) \subset \ell^{J+k}$, also the extremal distance between $ \ell^{J+k}$ and $ \ell^{J+k-1}$ is bounded by $e(n)$. But this probability tends to $0$ by \cite[Theorem 1.1]{EDCLE} and conformal invariance of CLE$_4$.}
\end{proof}

\subsection{Proof of Proposition \ref{prop_singular} and Corollary \ref{cor_BTLS}}

Using the results of the previous subsection, we now turn to the proof of Proposition \ref{prop_singular}.

\begin{proof}[Proof of Proposition \ref{prop_singular}]
Let $h$ be a Dirichlet GFF in $\D$. Let $\Gamma$ be a nested CLE$_4$ coupled to $h$ as described in the discussion before Theorem \ref{th_CLE} and let $z \mapsto S_r(z)$ be the corresponding weighted $\mathrm{CLE}_4$ nesting field. Observe that
\begin{equation*}
    \{ z \in \D : \limsup_{r \to 0} \frac{\sqrt{2\pi} S_r(z)}{\log 1/r} > Q(\xi), z \in \bigcup_{n \in \mathbb{N}^{*}} \Gamma^{(n)}\} = \emptyset \quad \text{{almost surely}.}
\end{equation*}
Therefore, the same arguments as in the proof of Corollary \ref{th_nestingfield} show that the set
\begin{equation*}
    \{z \in \D : \limsup_{r \to 0} \frac{S_r(z)}{\log 1/r} > Q(\xi) \}
\end{equation*}
is {almost surely} totally disconnected. Theorem \ref{th_CLEGFF} then allows us to conclude that $S_{h}^{\xi}(\D)$ is {almost surely} totally disconnected.
\end{proof}

From Theorem \ref{th_CLEGFF} and Corollary \ref{th_nestingfield}, we finally deduce Corollary \ref{cor_BTLS}.

\begin{proof}[Proof of Corollary \ref{cor_BTLS}]
Choose $M$ such that $K \leq 2\lambda(M-1)$. Then, by \cite[Proposition~3]{BTLS}, $A \subset \TVS_{-2M\lambda, 2M\lambda}$ {almost surely} where $\TVS_{-2M\lambda, 2M\lambda}$ denotes the two-valued set of level $-2M\lambda$ and $2M\lambda$ of $h$. As explained in \cite[Section~1.2]{BTLS}, $\TVS_{-2M\lambda, 2M\lambda}$ can be constructed from a nested CLE$_4$ $\Gamma$ in $\mathbb{D}$ coupled to $h$. Let us briefly recall this construction, which is the key to show the corollary. For $z \in \mathbb{D}$, recall that $\ell_{z}^{j}$ denotes the loop of $\text{Loop}^{(j)}(\Gamma)$ containing $z$. Then, for any $n \geq 1$, in the local set coupling $(h, \Gamma^{(n)})$, the value of the harmonic function in $\text{int} (\ell_{z}^{n})$ is given by
\begin{equation*}
    H_n(z) = \sum_{j=1}^{n} \xi_{\ell_{z}^{j}}
\end{equation*}
where $\PP(\xi_{\ell_{z}^{j}} = 2\lambda) = \PP(\xi_{\ell_{z}^{j}} = -2\lambda) =1/2$ and $(\xi_{\ell_{z}^{j}})_{1\leq j\leq n}$ are independent random variables. Moreover, if $z, z' \in \mathbb{D}$ are such that $\ell_{z}^{n} = \ell_{z'}^{n}$, then $H_n(z)=H_n(z')$. To construct $\TVS_{-2M\lambda, 2M\lambda}$ from $\Gamma$, for each $z \in \mathbb{Q}^{2} \cap \mathbb{D}$, we define $\tau_M(z) := \inf \{n \geq 1: \vert H_n(z) \vert = 2M\lambda \}$. $\tau_M(z)$ is {almost surely} finite since $(H_n(z))_{n \geq 1}$ is a simple random walk and $O^{\tau_M(z)}(z) := \text{int} (\ell_{z}^{\tau_{M}(z)})$ is then almost surely an open and simply connected set. Set 
\begin{equation*}
    A_M := \mathbb{D} \setminus \bigcup_{z \in \mathbb{Q}^{2} \cap \mathbb{D}} O^{\tau_M(z)}(z).
\end{equation*}
$A_M$ is a local set coupled to $h$ such that the corresponding harmonic function {almost surely} takes values in $\{-2M\lambda, 2M\lambda\}$ when restricted to a connected component of $\mathbb{D}\setminus A_M$. Therefore, by \cite[Proposition~1]{BTLS}, $A_M= \TVS_{-2M\lambda, 2M\lambda}$ {almost surely}.

This construction of $\TVS_{-2M\lambda, 2M\lambda}$ shows in particular that $\TVS_{-2M\lambda, 2M\lambda} \subset \cup_{n \geq 1} \Gamma^{(n)}$ {almost surely}. But by Theorem \ref{th_CLEGFF}, for any $\gamma \in (0,2]$, {almost surely}
\begin{align*}
    \big \{ z \in \mathbb{D}:  \lim_{r \to 0} \frac{h_r(z)}{\log 1/r} = \frac{\gamma}{\sqrt{2\pi}}, z \in \bigcup_{n \geq 1} \Gamma^{(n)} \big \} = \big \{ z \in \mathbb{D}:  \lim_{r \to 0} \frac{S_r(z)}{\log 1/r} = \frac{\gamma}{\sqrt{2\pi}}, z \in \bigcup_{n \geq 1} \Gamma^{(n)} \big \} = \emptyset.
\end{align*}
This completes the proof of Corollary \ref{cor_BTLS}, since the previous discussion shows that $A \subset \cup_{n \geq 1} \Gamma^{(n)}$ {almost surely}.
\end{proof}

Let $h$ be a GFF in $\mathbb{D}$ with Dirichlet boundary conditions. A direct calculation shows that if $z \in \partial \mathbb{D}$ and $r>0$, then $\EE[h_r(z)^2]$ is bounded by a constant independent of $z$ and $r$. By adapting the proof of Lemma 3.1 in \cite[Lemma~3.1]{thick_points}, we can deduce the following slight refinement of Corollary \ref{cor_BTLS}: if $A$ is a $K$-BTLS coupled to $h$, $K > 0$, then, for any $\gamma \in (0,2]$,
\begin{equation*}
    \big \{ z \in A \cup \partial \mathbb{D}:  \lim_{r \to 0} \frac{h_r(z)}{\log 1/r} = \frac{\gamma}{\sqrt{2\pi}} \big \} = \emptyset \quad \text{{almost surely.}}
\end{equation*}

\newpage
\bibliographystyle{alpha}
\bibliography{biblio}

\end{document}